\documentclass[11pt]{amsart}
\usepackage{amsfonts}
\usepackage{bbm}
\usepackage{amsfonts,amssymb,amsmath,amsthm}
\usepackage{url}
\usepackage{enumerate}
\usepackage{bbm}
\usepackage[all]{xy}
\usepackage[pdftex, colorlinks, citecolor=red, backref=page]{hyperref}

\urlstyle{sf}
\newtheorem{thrm}{Theorem}[section]
\newtheorem{lem}[thrm]{Lemma}
\newtheorem{prop}[thrm]{Proposition}
\newtheorem{cor}[thrm]{Corollary}
\theoremstyle{definition}
\newtheorem{definition}[thrm]{Definition}
\newtheorem{remark}[thrm]{Remark}
\numberwithin{equation}{section}

\author{George A. Elliott}
\address{Department of Mathematics, University of Toronto, Toronto, Ontario, Canada \ M5S 2E4}
\email{elliott@math.toronto.edu}
\author{Zhiqiang Li}
\address{College of Mathematics and Statistics, Chongqing University, Chongqing, China \ 401331}
\email{zqli@cqu.edu.cn}
\author{Xia Zhao}
\address{College of Mathematics, Tongji University, Shanghai, China \ 200092}
\email{1710383@tongji.edu.cn}

\keywords{Markov operators, Krein-Milman type theorem, non-unital subhomogeneous C*-algebras}

\subjclass[2000]{Primary 46L05, Secondary 46L35}

\begin{document}

\title[Thomsen-Li's theorem revisited ]{Thomsen-Li's theorem revisited}

\begin{abstract}
In this paper, we prove a general version of Thomsen-Li's theorem--a Krein-Milman type theorem for C*-algebras. Given a Markov operator on $\mathrm{C}[0,1]$ preserving a certain type of subspaces, which correspond to certain subhomogeneous C*-algebras on $[0,1]$, we approximate it by an average of $*$-homomorphisms on $\mathrm{C}[0,1]$ in the strong operator topology; moreover, we require the average preserves the same subspace. We also generalize this Krein-Milman type results to the case of path connected compact metrizable spaces. Such results could be useful for constructing $*$-homomorphisms of subhomogeneous C*-algebras.
\end{abstract}

\maketitle
\section{introduction}
There have been already great amounts of results in the classification program of C*-algebras, which is proposed by George A. Elliott, see for example \cite{G}. As it is well known, the main ingredients for the classification program involve two parts, i.e., existence theorems and uniqueness theorems. Among the proof of existence theorems, K. Thomsen and L. Li established some approximation results of a Markov operator by an average of homomorphisms, see \cite{Th} and \cite{Li}; especially,  Li's theorem achieves the improvement that the number of homomorphisms used for the approximation only depends on the given data of a finite subset and a tolerance.

However, all such  results are of concern with homogeneous C*-algebras $\mathrm{C}(X)$, the algebra of continuous functions on some compact metrizable space $X$. There are also many subhomogeneous C*-algebras, i.e., subalgebras of $\mathrm{C}(X)$. In the case of the unit interval, X. Jiang and H. Su studied the typical subhomogeneous algebras: splitting interval algebras and dimension drop interval algebras, see \cite{JS1} and \cite{JS2}. Moreover, there are even natural non-unital subhomogeneous algebras, for example, the building blocks considered in \cite{Rak}, they are stably projectionless. For such algebras, one cannot apply Thomsen and Li' results directly, it is desirable to establish some non-unital version of their theorems. In this paper, we will try to do so.

To be precise, we investigate the approximation problem for a Markov operator on homogeneous algebras preserving certain subspace, which indicates certain subhomogeneity. We want to use an average of homomorphisms on homogeneous algebras to do an approximation, additionally we require that the average also preserves the same subspace. In this paper, first of all we present such an approximation for $\mathrm{C}[0,1]$ with a subspace to be specified from certain C*-algebras. Consider the following C*-algebra (see e.g. \cite{Rak}):
$$\begin{aligned}
B(a,k)=\Big\{&f\in \mathrm{C}[0,1]\otimes \textrm{M}_m\ |\ f(0)=\left(
\begin{array}{cc}
                                             W\otimes id_a &    \\
                                              
                                                & 0\otimes id_{k}   \\
                                           \end{array}
                                         \right),\\
&f(1)=W\otimes id_{a+k}, W\in \textrm{M}_n\Big\},
\end{aligned} $$
where $a$ and $k$ are natural numbers, and $m$ and $n$ are proper integers. Such a C*-algebra is non-unital and stably projectionless. The space of continuous affine functions on tracial cone of $B(a,k)$ is isomorphic to the subspace $\mathrm{C}[0,1]_{(a,k)}$ of $\mathrm{C}[0,1]$ given by: $$\mathrm{C}[0,1]_{(a,k)}=\{f\in \mathrm{C}[0,1]\mid f(0)=\frac{a}{a+k}f(1)\},$$ (see Proposition 2.1 of \cite{Rak}). 

The first main result of this paper is the following theorem:

\begin{thrm}\label{thm1}
Given any finite subset $ F \subset C[0,1]_{(a,k)}$ and $\epsilon>0$, there is an integer $ N>0$ with the following property:
for any unital positive linear map $\phi$ on $C[0,1]$ which preserves $C[0,1]_{(a,k)}$, there are N homomorphisms $\phi_{1},\phi_{2},...,\phi_{N}$ from $C[0,1]$ to $C[0,1]$ such that $\dfrac{1}{N}\sum\limits_{i=1}^{N}\phi_{i}(f)$ belongs to $C[0,1]_{(a,k)}$ for all $f\in C[0,1]_{(a,k)}$ and
$$\|\phi(f)-\frac{1}{N}\sum\limits_{i=1}^N\phi_{i}(f)\| < \epsilon$$ for all $f\in F$.
\end{thrm}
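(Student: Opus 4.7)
The plan is to reduce the problem to a boundary-rigid version of Li's theorem \cite{Li}, and then modify the approximating maps near the endpoints. Set $\lambda = a/(a+k)$ and let $\mu_0, \mu_1$ be the probability measures on $[0,1]$ defined by $\phi(f)(i) = \int f \, d\mu_i$ for $i \in \{0,1\}$. Since $C[0,1]_{(a,k)}$ is the kernel of the functional $f \mapsto f(0) - \lambda f(1)$, its annihilator in the dual is the line spanned by $\delta_0 - \lambda \delta_1$. Subspace-preservation thus gives $\mu_0 - \lambda \mu_1 = c(\delta_0 - \lambda \delta_1)$, and pairing with the unit $1 \in C[0,1]$ together with the unitality of $\phi$ forces $c = 1$. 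Combined with the positivity of $\mu_0$ and $\mu_1$, this forces $\mu_0 = \delta_0$ and $\mu_1 = \delta_1$; that is, $\phi(f)(0) = f(0)$ and $\phi(f)(1) = f(1)$ for every $f$. The identical analysis applied to a unital average $\frac{1}{N}\sum \phi_i$ (where $\phi_i(f) = f \circ h_i$ for continuous $h_i\colon [0,1] \to [0,1]$) shows that such an average preserves $C[0,1]_{(a,k)}$ if and only if $h_i(0) = 0$ and $h_i(1) = 1$ for every $i$.

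Next, I would apply Li's theorem to $\phi$ with the enlarged test set $F' = F \cup \{\iota, 1 - \iota\}$, where $\iota(t) = t$, and a tolerance $\epsilon'$ to be chosen below. This yields $N = N(F', \epsilon')$ homomorphisms $\tilde\phi_i(f) = f \circ \tilde h_i$ with $\|\phi(g) - \frac{1}{N}\sum_i g \circ \tilde h_i\|_\infty < \epsilon'$ for all $g \in F'$. Choose $\delta > 0$ so that $|\phi(f)(t) - f(0)| < \epsilon/4$ on $[0,\delta]$ and $|\phi(f)(t) - f(1)| < \epsilon/4$ on $[1-\delta, 1]$ for every $f \in F$ --- possible because $\mu_0 = \delta_0$ and $\mu_1 = \delta_1$. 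Replace $\tilde h_i$ on $[0,\delta]$ by the linear interpolant joining $(0,0)$ to $(\delta, \tilde h_i(\delta))$, and perform the symmetric replacement on $[1-\delta, 1]$. The resulting $h_i$ are continuous with $h_i(0) = 0$ and $h_i(1) = 1$, so the average $\frac{1}{N}\sum_i f\circ h_i$ preserves the subspace.

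The main obstacle is controlling the error on the boundary intervals. Decomposing
\begin{equation*}
\Bigl|\phi(f)(t) - \tfrac{1}{N}\sum_i f(h_i(t))\Bigr| \le |\phi(f)(t) - f(0)| + \tfrac{1}{N}\sum_i |f(h_i(t)) - f(0)|
\end{equation*}
on $[0,\delta]$, the first term is bounded by the choice of $\delta$. For the second, since $h_i(t) \in [0, \tilde h_i(\delta)]$ on this interval, I would split indices into $G = \{i : \tilde h_i(\delta) \le \eta\}$ and $B = \{i : \tilde h_i(\delta) > \eta\}$. The Markov-type bound $|B|/N \le \eta^{-1} \cdot \frac{1}{N}\sum_i \tilde h_i(\delta) \le \eta^{-1}(\phi(\iota)(\delta) + \epsilon')$ uses Li's approximation on $\iota \in F'$, and since $\phi(\iota)(\delta) \to 0$ as $\delta \to 0$ (by $\mu_0 = \delta_0$), the outlier fraction can be made arbitrarily small; the $G$-contribution is at most the modulus of continuity $\omega_f(\eta)$ evaluated at scale $\eta$. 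A symmetric estimate handles $[1-\delta,1]$. Choosing $\eta$ small (in terms of the moduli of continuity of $f \in F$), then $\epsilon'$ small (which fixes $N$), and finally $\delta$ small in terms of $\eta$, $\epsilon'$, and $\phi$, delivers the $\epsilon$-bound with the required uniformity of $N$ in $\phi$.
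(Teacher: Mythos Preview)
Your argument is correct and follows a genuinely different route from the paper's. Both proofs begin by establishing that $\mu_0=\delta_0$ and $\mu_1=\delta_1$ (the paper's Lemma~\ref{point mass}); your annihilator computation ($\mu_0-\lambda\mu_1=\delta_0-\lambda\delta_1$, then positivity forces the point masses) is cleaner than the paper's test-function argument. From there the strategies diverge. The paper reworks Li's construction from the inside: it builds the continuous $h(y,t)$ explicitly, observes that $h(0,t)\equiv 0$ while $h(1,t)=1$ only on $[\delta,1-\delta]$, and then discards the sample points $t_j=j/N_1$ outside that window, counting the discarded fraction directly. You instead treat Li's theorem as a black box, surgically modify each eigenvalue map $\tilde h_i$ by linear interpolation near $0$ and $1$, and control the boundary error with a Markov-type inequality using the extra test function $\iota$. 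Your order of choices ($\eta$, then $\epsilon'$ fixing $N$, then $\delta$ depending on $\phi$) correctly delivers uniformity of $N$ in $\phi$.

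Your approach is more modular and arguably more elegant. The paper's approach, however, buys two things you do not get for free: first, the homomorphisms arise as $f\mapsto f(h(\cdot,t_j))$ for a single continuous $h$ on $[0,1]\times[0,1]$, which is exactly what is needed in the corollary to build $*$-homomorphisms between the subhomogeneous algebras $A$ and $B$; second, the explicit $h(y,t)$ framework is what the paper perturbs in the substantially harder case $\alpha\neq\beta$ (Theorem~\ref{thm2}), where the endpoint measures are no longer point masses and your interpolation-at-the-boundary trick would not apply.
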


 For the case of different subspaces $\mathrm{C}[0,1]_{(a,k)}$ and $\mathrm{C}[0,1]_{(b,k)}$, there is no unital positive linear map on $\mathrm{C}[0,1]$ which sends $\mathrm{C}[0,1]_{(a,k)}$ to $\mathrm{C}[0,1]_{(b,k)}$ if $b<a$, see Remark \ref{b<a}. So we only need to deal with the case $b>a$.

\begin{thrm}\label{thm2}
Given any finite subset $ F \subset C[0,1]_{(a,k)}$ and $\epsilon>0$, any unital positive linear map $\phi$ on $C[0,1]$ which sends $C[0,1]_{(a,k)}$ to $C[0,1]_{(b,k)}$ with $b>a$, then there are N homomorphisms $\phi_{1},\phi_{2},...,\phi_{N}: C[0,1]\rightarrow C[0,1]$ such that $\dfrac{1}{N}\sum\limits_{i=1}^N\phi_{i}(f)$ belongs to $C[0,1]_{(b,k)}$ for all $f\in C[0,1]_{(a,k)}$ and
\[ \|\phi(f)-\frac{1}{N}\sum_{i=1}^N\phi_{i}(f)\| < \epsilon\] for all $f\in F$.
\end{thrm}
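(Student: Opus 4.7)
The plan is to recast the subspace-preservation condition as an identity between measures, apply an unconstrained Thomsen--Li-style approximation of $\phi$, and then perform surgery on the resulting $*$-homomorphisms near the endpoints $\{0,1\}$ to enforce the identity exactly.

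A unital positive linear $\phi$ on $\mathrm{C}[0,1]$ is represented by a weak-$*$ continuous family $\{\mu_t\}_{t\in[0,1]}$ of Borel probability measures with $\phi(f)(t)=\int f\,d\mu_t$. The subspace $\mathrm{C}[0,1]_{(a,k)}$ is of codimension one, with annihilator spanned by $f\mapsto f(0)-\tfrac{a}{a+k}f(1)$, so the hypothesis $\phi(\mathrm{C}[0,1]_{(a,k)})\subseteq \mathrm{C}[0,1]_{(b,k)}$ is equivalent to the signed-measure equality
\[
\mu_0-\tfrac{b}{b+k}\mu_1=\tfrac{a+k}{b+k}\delta_0-\tfrac{a}{b+k}\delta_1,
\]
the scalar coefficient being fixed by evaluating both sides at the constant function. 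Restriction to the open interval yields the proportionality $\mu_0|_{(0,1)}=\tfrac{b}{b+k}\mu_1|_{(0,1)}$. The same identity with $\mu_0,\mu_1$ replaced by the empirical endpoint measures $\tilde\mu_0=\tfrac{1}{N}\sum\delta_{\psi_i(0)}$ and $\tilde\mu_1=\tfrac{1}{N}\sum\delta_{\psi_i(1)}$ is precisely the condition that $\tfrac{1}{N}\sum\phi_i$ sends $\mathrm{C}[0,1]_{(a,k)}$ into $\mathrm{C}[0,1]_{(b,k)}$.

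Next I would invoke Thomsen--Li's unconstrained approximation for $\phi$ with a tolerance $\epsilon'\ll\epsilon$ and an enlarged finite set $F'\supset F$ containing sharp cutoffs concentrated near $\{0,1\}$. The resulting $*$-homomorphisms yield empirical endpoint measures $\tilde\mu_0,\tilde\mu_1$ that approximate $\mu_0,\mu_1$, so the identity above holds approximately for the empirical measures, with any discrepancy concentrated in small neighborhoods of $\{0,1\}$. I then perform surgery: for each $\tilde\psi_i$ whose endpoint value $\tilde\psi_i(0)$ or $\tilde\psi_i(1)$ lies in a small neighborhood of $0$ or $1$, I redefine $\tilde\psi_i$ on a short interval adjoining the relevant endpoint $t=0$ or $t=1$ so as to snap its value to exactly $0$ or $1$ (possible by path-connectedness of $[0,1]$). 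The counts of snappings are chosen to realise the atomic parts of the identity. Interior proportionality $\tilde\mu_0|_{(0,1)}=\tfrac{b}{b+k}\tilde\mu_1|_{(0,1)}$ is then enforced by adjoining a bounded number of auxiliary homomorphisms of standard type which redistribute mass in the ratio $b:(b+k)$, with $N$ chosen divisible by $b+k$.

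The principal obstacle is this final surgery and balancing step: $\epsilon'$ must be small relative to the common modulus of continuity of $F$ so that neither the surgery near the endpoints nor the adjoining of auxiliary homomorphisms destroys the approximation on $F$, and a careful combinatorial argument is needed to keep the number $N$ of homomorphisms dependent only on $F$ and $\epsilon$. The assumption $b>a$ enters here, as it gives the positivity constraints $\mu_1(\{1\})\ge a/b$ and $\mu_1(\{0\})\le(b-a)/b$ needed for the identity to be realisable as a pair of empirical atomic probability measures.
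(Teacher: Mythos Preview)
Your signed-measure identity
\[
\mu_0-\tfrac{b}{b+k}\mu_1=\tfrac{a+k}{b+k}\delta_0-\tfrac{a}{b+k}\delta_1
\]
is exactly the right reformulation, and it is what the paper obtains (in partitioned form) as Lemma~3.4 and Corollary~3.5. The difference lies in how one uses it.

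The paper does \emph{not} apply Thomsen--Li as a black box and then repair. It re-runs Li's construction: one builds a continuous $h:[0,1]\times[0,1]\to[0,1]$ whose level-set structure along the $t$-axis is governed, at each $y$, by explicit functions $G_j(y)$ derived from the $\mu_y(X_i)$ and from the \emph{differences} $\mu_1(X_i)-\mu_0(X_i)$. This guarantees that at $y=0$ and $y=1$ the function $t\mapsto h(y,t)$ hits the \emph{same} finite grid $\{x_1,\dots,x_n\}$, with the lengths of the plateaus at each interior $x_i$ already in the required ratio $b:(b+k)$ (up to a rational approximation furnished by Lemma~3.6). One then samples $t_j=j/N_1$ and discards an explicit $O(\delta N_1)$ of them near the plateau boundaries so that the remaining evaluation counts satisfy the identity exactly (Corollary~3.8).

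Your surgery proposal, by contrast, starts from an unconstrained Thomsen--Li output $\psi_1,\dots,\psi_{N'}$ whose endpoint values $\psi_i(0),\psi_i(1)$ are generically \emph{distinct} points of $(0,1)$. The interior proportionality $\tilde\mu_0|_{(0,1)}=\tfrac{b}{b+k}\tilde\mu_1|_{(0,1)}$ then forces, for each atom $x\in(0,1)$, that the multiplicities satisfy $p_x(b+k)=q_x b$; snapping values near $\{0,1\}$ does nothing for this, and ``adjoining a bounded number of auxiliary homomorphisms'' cannot fix an unbounded number of unmatched interior atoms without degrading the approximation. This is the genuine gap: you need the $\psi_i(0)$'s and $\psi_i(1)$'s to live on a \emph{common} finite grid with controlled multiplicities, and black-box Thomsen--Li gives no such coordination. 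The paper's solution is to plant that coordination into $h(y,t)$ from the outset.

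One further point: you aim to make $N$ depend only on $F$ and $\epsilon$. The paper explicitly notes (Remark after the proof) that in Theorem~1.2, unlike Theorem~1.1, it only obtains $N$ depending on $F$, $\epsilon$ \emph{and} $\phi$---because the choice of $\delta$ in Step~IV must be small relative to the nonzero $\mu_0(X_i)$, which are $\phi$-dependent. The theorem as stated does not require uniformity in $\phi$, so you need not pursue that stronger claim.
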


In general, we replace $\frac{a}{a+k}$ by $\alpha\in(0,1)$ and $[0,1]$ by path connected compact metrizable spaces $X$ and $Y$. Let $p, q$ and $z, w$ be fixed points in $X$ and $Y$ respectively, define $$\mathrm{C}(X)_{(p,q,\alpha)}=\{f\in \mathrm{C}(X) \mid f(p)=\alpha f(q)\},$$ and $$\mathrm{C}(Y)_{(z,w,\beta)}=\{f\in \mathrm{C}(X) \mid f(z)=\beta f(w)\}.$$

Then we have the following generalizations:

\begin{thrm}\label{thm3}
Suppose that $X$ is a path connected compact metrizable space, $Y$ is a compact metrizable space. For any finite subset $ F \subset C(X)_{(p,q,\alpha)}$ and $\varepsilon>0$. There is an integer $ N>0$ with the following property:
for any  unital positive linear map $\phi:C(X)\rightarrow C(Y)$ which sends $C(X)_{(p,q,\alpha)}$ to $ C(Y)_{(z,w,\alpha)}$, then there are N homomorphisms $\phi_{1},\phi_{2},...,\phi_{N}:C(X)\rightarrow C(Y)$ such that 
$\dfrac{1}{N}\sum\limits_{i=1}^{N}\phi_{i}(f)$ belongs to $C(Y)_{(z,w,\alpha)}$ for all $f\in C(X)_{(p,q,\alpha)}$ and
$$\|\phi(f)-\frac{1}{N}\sum\limits_{i=1}^N\phi_{i}(f)\| < \epsilon$$ for all $f\in F$.
\end{thrm}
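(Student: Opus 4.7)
The plan is to reduce the theorem, via the Riesz representation of $\phi$, to a rigidity statement at the two base points and then to the classical Thomsen--Li approximation theorem, followed by a boundary correction using path connectedness of $X$. By Riesz, $\phi(f)(y) = \int_X f\,d\mu_y$ for a weak-$*$-continuous family $\{\mu_y\}_{y\in Y}$ of Borel probability measures on $X$. The hypothesis $\phi(C(X)_{(p,q,\alpha)})\subset C(Y)_{(z,w,\alpha)}$ forces $\mu_z-\alpha\mu_w$ to annihilate the codimension-one hyperplane $\{f\in C(X):f(p)=\alpha f(q)\}$, so this signed measure is a scalar multiple of $\delta_p-\alpha\delta_q$; evaluating at the constant function $1$ fixes the scalar, yielding $\mu_z=\alpha\mu_w+\delta_p-\alpha\delta_q$. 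Since $\alpha\in(0,1)$, $p\ne q$, and $\mu_z\ge 0$, comparing atomic masses at $q$ forces $\mu_w(\{q\})\ge 1$, whence $\mu_w=\delta_q$ and $\mu_z=\delta_p$. The same argument applied to the empirical measure $\frac{1}{N}\sum_i\delta_{\psi_i(y)}$ shows that any family of $*$-homomorphisms $\phi_i(f)=f\circ\psi_i$ whose average preserves the subspace structure must satisfy $\psi_i(z)=p$ and $\psi_i(w)=q$ for every $i$; this reduces the task to constructing such $\psi_i$ whose point-mass average approximates $\phi$.

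For this, I would first apply the classical Thomsen--Li theorem (free of boundary constraints) to $\phi$ on an enlarged finite set $F'=F\cup\{g_p,g_q\}$, where $g_p,g_q\in C(X)$ are nonnegative continuous functions vanishing exactly at $p$ and $q$ respectively (for example, $g_p(x)=\min(d(x,p)/r,1)$ for a suitable $r>0$). This yields continuous maps $\tilde\psi_1,\dots,\tilde\psi_N\colon Y\to X$ with $\|\phi(f)-\frac{1}{N}\sum_i f\circ\tilde\psi_i\|<\epsilon/3$ for all $f\in F'$. Since $\phi(g_p)(z)=g_p(p)=0$, the bound at $y=z$ forces the fraction of indices $i$ with $\tilde\psi_i(z)$ outside the ball $B_r(p)$ to be less than $\epsilon/3$; similarly at $w$. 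Inside small disjoint open neighborhoods $U\ni z$ and $V\ni w$ of $Y$, I would then modify each $\tilde\psi_i$ to a new $\psi_i$ with $\psi_i(z)=p$, $\psi_i(w)=q$, coinciding with $\tilde\psi_i$ off $U\cup V$, by interpolating along paths $\gamma_i^z$ from $\tilde\psi_i(z)$ to $p$ and $\gamma_i^w$ from $\tilde\psi_i(w)$ to $q$ (available by path connectedness of $X$) via a continuous cut-off $\lambda\colon Y\to[0,1]$ with $\lambda(z)=\lambda(w)=0$ and $\lambda\equiv 1$ off $U\cup V$. The resulting error then splits into a contribution from the small outlier fraction (bounded by $2\|f\|_\infty$ times that fraction) and a contribution from the good indices (bounded by the oscillation of $f$ on a small ball around $p$ or $q$).

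The main obstacle is the gluing step: path connectedness of $X$ does not imply local path connectedness, so a path between two nearby points of $X$ may wander far. I expect to handle outliers by substituting a single fixed baseline map $\psi_0\colon Y\to X$ with $\psi_0(z)=p$ and $\psi_0(w)=q$---constructed by precomposing a chosen path $p\rightsquigarrow q$ in $X$ with a Urysohn function $s\colon Y\to[0,1]$ having $s(z)=0$ and $s(w)=1$---for the outlier indices; the resulting perturbation is at most $2\|f\|_\infty$ times the outlier fraction and is absorbed into the Thomsen--Li error budget. For the good indices, the modification can be performed along paths of small diameter by enriching $F'$ with further test functions enforcing tighter clustering, with the residual error estimated via uniform continuity of $f\in F$, yielding the desired approximation on $F$.
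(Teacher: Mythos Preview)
Your opening rigidity argument---that $\mu_z-\alpha\mu_w$ must lie in the one-dimensional annihilator of the hyperplane and hence equals $\delta_p-\alpha\delta_q$, forcing $\mu_w=\delta_q$ and $\mu_z=\delta_p$---is correct and cleaner than the paper's route via test functions and partition estimates (Lemma~\ref{mea4} and Corollary~\ref{lem:gmea4}). The gap is in the gluing step. With your cut-off $\lambda$ and a path $\gamma_i^z\colon[0,1]\to X$ from $p$ to $\tilde\psi_i(z)$, the only natural formula is $\psi_i(y)=\gamma_i^z(\lambda(y))$ on $U$; but then $\psi_i$ takes the constant value $\tilde\psi_i(z)$ on the set $\{\lambda=1\}\cap\overline U$, while just outside $U$ it equals $\tilde\psi_i(y)$, and these do not agree on $\partial U$. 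To glue continuously you would need either a homotopy in $X$, rel $\partial U$, from $\tilde\psi_i|_{\overline U}$ to a map sending $z$ to $p$, or a deformation of $\overline U$ in $Y$ collapsing a smaller neighborhood of $z$ to the single point $z$; neither is supplied by the hypotheses, since $Y$ is an arbitrary compact metrizable space with no local contractibility assumed and $X$ is only path connected. This continuity problem is prior to, and independent of, the path-diameter obstacle you flag; note too that ``enriching $F'$'' cannot cure the latter, because tighter clustering of the values $\tilde\psi_i(z)$ near $p$ says nothing about diameters of connecting paths in a space that is not locally path connected.

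The paper sidesteps both issues by not treating Thomsen--Li as a black box. It reruns Li's construction: fix a $\delta_0$-net $x_1=p,\dots,x_n=q$ in $X$ together with paths in $X$ from $p$ to each $x_j$ (this is where path connectedness of $X$ enters), and assemble a single continuous $h\colon Y\times[0,1]\to X$ which, for each fixed $y$, traverses the path to $x_j$ and back on a $t$-subinterval of length $\lambda_j(y)$. The rigidity gives $\lambda_1(z)=1$ and $\lambda_n(w)=1$, so that $h(z,\cdot)\equiv p$ and $h(w,t)=q$ for all $t$ in a central window $[\delta,1-\delta]$; sampling the $t_j$ from that window makes every $\psi_j=h(\cdot,t_j)$ satisfy $\psi_j(z)=p$ and $\psi_j(w)=q$ automatically, with no post-hoc surgery on $Y$ required.
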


\begin{thrm}\label{thm4}
Suppose that $X$ is a path connected compact metrizable space, $Y$ is a compact metrizable space, and $\alpha$, $\beta$ are rational numbers with $\beta>\alpha$. For any finite subset $ F \subset C(X)_{(p,q,\alpha)}$ and $\varepsilon>0$, any  unital positive linear map $\phi:C(X)\rightarrow C(Y)$ which sends $C(X)_{(p,q,\alpha)}$ to $ C(Y)_{(z,w,\beta)}$, there are N homomorphisms $\phi_{1},\phi_{2},...,\phi_{N}:C(X)\rightarrow C(Y)$ such that $\dfrac{1}{N}\sum\limits_{i=1}^{N}\phi_{i}(f)$ belongs to $C(Y)_{(z,w,\beta)}$ for all $f\in C(X)_{(p,q,\alpha)}$ and
$$\|\phi(f)-\frac{1}{N}\sum\limits_{i=1}^N\phi_{i}(f)\| < \epsilon$$ for all $f\in F$.
\end{thrm}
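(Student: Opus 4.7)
\emph{Proof plan.} My plan is to parallel the strategy of Theorem~\ref{thm2}, using a classical Thomsen--Li approximation for general compact metric $X,Y$ as a black box and then modifying the resulting $*$-homomorphisms in small neighborhoods of $z$ and $w$ to enforce the subspace-preserving identity exactly. The new ingredients relative to Theorem~\ref{thm3} are a nontrivial positivity-coupling between the values at $z$ and $w$ arising from $\beta>\alpha$, and the need to realize certain atom multiplicities exactly, which is precisely where the rationality of $\alpha,\beta$ enters.

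I would first reformulate the hypothesis as a measure-theoretic identity. Writing $\mu_y:=\phi^{*}(\delta_y)$ for the pull-back probability measure on $X$ at $y\in Y$, the condition $\phi(\mathrm{C}(X)_{(p,q,\alpha)})\subset\mathrm{C}(Y)_{(z,w,\beta)}$ forces $\mu_z-\beta\mu_w$ to annihilate the codimension-one subspace $\mathrm{C}(X)_{(p,q,\alpha)}$; a mass comparison yields
\[
\mu_z-\beta\mu_w \;=\; \lambda(\delta_p-\alpha\delta_q), \qquad \lambda:=\frac{1-\beta}{1-\alpha}\in(0,1).
\]
Positivity of $\mu_z$ then forces $\mu_w(\{q\})\geq\lambda\alpha/\beta$ and $\mu_z(\{p\})\geq\lambda$, and one checks that $\mu_z=\lambda\delta_p+(1-\lambda)\nu$ and $\mu_w=(\lambda\alpha/\beta)\delta_q+(1-\lambda\alpha/\beta)\nu$ for a common probability measure $\nu$ on $X$. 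The average $(1/N)\sum_i\phi_i$ of homomorphisms $\phi_i(f)=f\circ\psi_i$ preserves the subspace relation if and only if the same linear identity holds for the empirical measures $\bar\mu_z=(1/N)\sum_i\delta_{\psi_i(z)}$ and $\bar\mu_w=(1/N)\sum_i\delta_{\psi_i(w)}$. I then apply Li's classical approximation theorem~\cite{Li} to $\phi$ on a suitably enlarged $F$, obtaining $N_0$ continuous maps $\psi_1^0,\ldots,\psi_{N_0}^0\colon Y\to X$ with $\|\phi(f)-(1/N_0)\sum_i f\circ\psi_i^0\|<\epsilon/3$ on $F$, choose $N$ to be a common multiple of $N_0$ and of the denominators of $\lambda$, $\lambda\alpha/\beta$, and $\beta$ (possible by rationality), and replicate each $\psi_i^0$ exactly $N/N_0$ times.

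The crux is to modify these $N$ maps, only in small neighborhoods of $z$ and $w$ in $Y$, so as to enforce: (i) exactly $N\lambda\alpha/\beta$ indices with $\psi_i(z)=p$ and $\psi_i(w)=q$; (ii) exactly $N\lambda(\beta-\alpha)/\beta$ further indices with $\psi_i(z)=p$ and $\psi_i(w)$ equal to one of finitely many chosen anchor points $x_j\in X\setminus\{p,q\}$; (iii) the remaining $N(1-\lambda)$ indices with $\psi_i(z)=\psi_i(w)=x_j$, with the multiplicities in~(ii) and~(iii) paired per anchor in the ratio $(Q-P):P$ dictated by writing $\beta=P/Q$ in lowest form, so that the residual identity $\sum_{\psi_i(z)\neq p}\delta_{\psi_i(z)}=\beta\sum_{\psi_i(w)\neq q}\delta_{\psi_i(w)}$ holds automatically. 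Each individual modification is realized by continuously deforming $\psi_i^0$ along a path in $X$ over an Urysohn neighborhood of $z$ or $w$ in $Y$, using the path-connectedness of $X$. The principal obstacle is verifying that these modifications perturb the approximation on $F$ by less than a further $2\epsilon/3$: equicontinuity of $F$ controls the change at points $y$ outside the modification neighborhoods, while inside them the weak convergence of the original empirical pull-back measures at $z,w$ to $\mu_z,\mu_w$ ensures that only a small fraction of the $\psi_i^0$ need be moved appreciably, most being already near the designated atoms or anchor points. A careful quantitative bookkeeping, together with the packet design underlying (i)--(iii), forms the technical heart of the proof.
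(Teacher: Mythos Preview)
Your approach is genuinely different from the paper's. The paper does not treat Li's theorem as a black box and then modify; instead it re-enters Li's construction and redesigns the intermediate function $h(y,t)$ so that the partition points $G_{j}(y)$ satisfy $G_{2k}(z)=G_{2k}(w)$ (the ``consistency'' noted in Step~II of the proof of Theorem~\ref{thm2}). This alignment means that a single parameter $t_j\in[0,1]$ lands in controlled positions at \emph{both} $y=z$ and $y=w$, and one can then prune the set of $t_j$'s exactly as in Step~IV. All continuity in $y\in Y$ comes for free from the continuity of $h(y,t)$; the geometry of $Y$ never enters. Your measure-theoretic reduction $\mu_z-\beta\mu_w=\lambda(\delta_p-\alpha\delta_q)$ and the common residual $\nu$ are correct and in fact more transparent than the paper's partition-by-partition statement in Lemma~\ref{mea4}, and the packet arithmetic in (i)--(iii) does enforce the subspace identity.

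The gap is in the modification step. Your design (iii) forces $\psi_i(z)=\psi_i(w)=x_j$ for the \emph{same} index $i$ and the \emph{same} anchor $x_j$. But the subspace condition constrains only the two multisets $\{\psi_i(z)\}_i$ and $\{\psi_i(w)\}_i$, not their pairing by index; by insisting on the pairing you are demanding a correlation between $\psi_i^0(z)$ and $\psi_i^0(w)$ that Li's theorem, invoked as a black box, does not supply. For a generic $\phi$ there is no reason for $\psi_i^0(z)$ and $\psi_i^0(w)$ to be close for most $i$, so a fixed positive fraction of the group~(iii) indices will require a large deformation at $z$ or at $w$, and your ``only a small fraction need be moved appreciably'' heuristic breaks down precisely here. (There is a second, more technical issue: deforming $\psi_i^0$ along a path over an Urysohn function so that it agrees with $\psi_i^0$ on $\partial U_z$ is not automatic for general compact metric $Y$, since $\psi_i^0$ varies on $\partial U_z$; your recipe does not address this.) The paper sidesteps both problems by building the $z$/$w$ correlation directly into $h(y,t)$ via the auxiliary functions $l_j(y)$, so that the selection in Step~IV is purely combinatorial in the $t$-variable. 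If you wish to salvage your route, you should decouple the index assignments at $z$ and at $w$ (choosing two independent bijections from indices to target multisets), which removes the spurious correlation constraint; but you would still need to handle the continuous gluing on $Y$.
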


To achieve these results, we keep tracking the approximation process of Li's theorem in \cite{Li}, and the crucial point is that we must argue if we are able to choose proper eigenvalue maps to define homomorphisms such that their average preserves the subspace. The existence of such a choice relies on an analysis of the measures induced by evaluations of a given Markov operator at 0 and 1.

The paper is organized as follows. Section 2 contains some preliminaries about Markov operators, and basic properties of the subspace $\mathrm{C}[0,1]_{(a,k)}$ of $\mathrm{C}[0,1]$. In Section 3, concrete analysis of the measures induced by evaluations of a given Markov operator at 0 and 1 are given, and based on this the proof of Theorem \ref{thm1} and \ref{thm2} are presented.
Section 4 gives the general case of the approximation results of Theorem \ref{thm3}, \ref{thm4}.

\section{preliminaries}
\begin{definition} Given compact Hausdorff spaces $X$ and $Y$, a Markov operator $T$ from $C(X)$ to $C(Y)$ is a unital positive linear map.
\end{definition}
The set of Markov operators forms a convex set of bounded operators, and it is well known that the extreme points are the unital homomorphisms, see \cite{Est}.

K. Thomsen and L. Li proved a Krein-Milman type theorem for Markov operators, which is frequently used in the classification program for establishing existence theorems. What we are concerned with is for the case of subhomogeneous algebras, see for example, \cite{JS1}, \cite{JS2}, etc.. In S. Razak's paper \cite{Rak}, he considered
certain stably projectionless building blocks--necessarily non-unital. The space of continuous affine functions on this building block's tracial cone is a non-unital subspace of $C[0,1]$, see Proposition 2.1 in \cite{Rak}. Therefore, we consider Markov operators on homogeneous algebras which preserve this subspace.
Fix a positive integer $a$ and a positive integer $k$, denote by $C[0,1]_{(a,k)}$ the subspace of $C[0,1]$: $$C[0,1]_{(a,k)}=\{f\in C[0,1]\mid f(0)=\frac{a}{a+k}f(1)\}.$$

Next we shall see some examples of Markov operators on $C[0,1]$ which preserve this subspace.
\begin{itemize}
\item Example 1: Choose a continuous function $\lambda: [0,1]\rightarrow [0.1]$ with $\lambda(0)=0$  and $\lambda(1)=1$, define a Markov operator $T$ from $C[0,1]$ to $C[0,1]$ by $T(f)=f\circ \lambda$, then $T$ sends functions in $C[0,1]_{(a,k)}$ to $C[0,1]_{(a,k)} $.
\item Example 2: Choose two continuous functions $\lambda_1$  and $\lambda_2$ on the unit interval with $\lambda_1(0)=0, \lambda_1(1)=1$ and $\lambda_2(0)=1, \lambda_2(1)=0$, and choose proper $k_1>k_2\geq0$, then one can define a Markov operator $T$ on $C[0,1]$ as follows: $$T(f)=\frac{k_1f\circ\lambda_1+k_2f\circ\lambda_2}{k_1+k_2}.$$ Then $T$ sends functions in $C[0,1]_{(a,k)}$ to $C[0,1]_{(b,k)}$, where $b=\dfrac{k_1a+k_2a+k_2k}{k_1-k_2}\geq0$.
\item Example 3: In general, one has similar examples which involve more points. Choose the $\lambda_1, \lambda_2$ above, and $\lambda_3(t)=1/2$ (one can pick up any point in the middle), then one can choose proper $k_1, k_2$ to define a Markov operator $T$ as follows: $$T(f)(t)=\frac{k_1f\circ\lambda_1+k_2f\circ\lambda_2+s(t)f\circ\lambda_3}{k_1+k_2+s(t)},$$ where $s(t)=(\dfrac{k_1a}{k+a}+k_2)(1-t)+(\dfrac{k_2a}{k+a}+k_1)t$. One can verify that $T$ sends functions in $C[0,1]_{(a,k)}$ to $C[0,1]_{(b,k)}$
 for some $b>0$. This process can continue to involve more points in $[0,1]$.
\end{itemize}

The following direct sum decomposition holds.

\begin{lem}\label{lem:direct}
 $
  C[0,1]=C[0,1]_{(a,k)}\oplus  \mathbb{R}.$
\end{lem}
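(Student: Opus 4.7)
The plan is to prove the decomposition by explicitly solving for the constant component. The identification of $\mathbb{R}$ with the constant functions in $C[0,1]$ is understood throughout.

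First I would handle the trivial intersection $C[0,1]_{(a,k)} \cap \mathbb{R} = \{0\}$: a constant function $c$ lies in $C[0,1]_{(a,k)}$ iff $c = \frac{a}{a+k}c$, i.e. $c(1 - \frac{a}{a+k}) = c \cdot \frac{k}{a+k} = 0$. Since $k \geq 1$ (so $\frac{a}{a+k} \neq 1$), this forces $c = 0$. This gives the directness of the sum.

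Next, for existence (and uniqueness) of the decomposition, given $f \in C[0,1]$ I would look for $c \in \mathbb{R}$ such that $f - c \in C[0,1]_{(a,k)}$, i.e. $(f-c)(0) = \frac{a}{a+k}(f-c)(1)$. Rearranging gives
\[
f(0) - \tfrac{a}{a+k}f(1) = c \cdot \tfrac{k}{a+k},
\]
so $c$ is determined uniquely by the explicit formula $c = \frac{a+k}{k} f(0) - \frac{a}{k} f(1)$. Setting $g := f - c$, we obtain $g \in C[0,1]_{(a,k)}$ and $f = g + c$. Uniqueness of the representation follows either from the first step (intersection is zero) or directly from the uniqueness of $c$ determined above.

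There is no real obstacle here; the only thing to watch is to use that $k \geq 1$ to guarantee $\frac{a}{a+k} \neq 1$, which is exactly what makes the scalar direction transversal to the hyperplane $C[0,1]_{(a,k)}$.
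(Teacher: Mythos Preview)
Your proof is correct and follows essentially the same approach as the paper: both solve explicitly for the constant $c=\lambda=\dfrac{(a+k)f(0)-af(1)}{k}$ and use that $k\neq 0$ (equivalently $\frac{a}{a+k}\neq 1$) to obtain directness/uniqueness. The only cosmetic difference is that you first isolate the trivial intersection, whereas the paper argues uniqueness directly from two decompositions.
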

\begin{proof} Suppose $f\in C[0,1]$, let $\lambda=\frac{(a+k)f(0)-af(1)}{k},\;g(x)=f(x)-\lambda$
Then $f(x)=\lambda+g(x)$ and $g(x)\in C[0,1]_{(a,k)}$.
Next we show the decomposition is unique. Assume $f(x)=g_{1}(x)+\lambda_{1}=g_{2}(x)+\lambda_{2}$. Then $ \lambda_{1}-\lambda_{2}=g_{1}(x)-g_{2}(x)$ for any $x\in[0,1]$. It follows that $ \lambda_{1}-\lambda_{2}=g_{1}(0)-g_{2}(0)=g_{1}(1)-g_{2}(1)$ for any $x\in[0,1]$. But $g_{1}(0)-g_{2}(0)=\frac{a}{a+k}(g_{1}(1)-g_{2}(1))$. Since $a\neq0$, we have $\lambda_{1}=\lambda_{2}$ and $g_{1}(x)=g_{2}(x)$.
\end{proof}

In the dual viewpoint, one might consider positive linear maps on these subspaces which extend to Markov operators on $C[0,1]$. Based on the direct sum decomposition, any positive linear map $\phi$ from $C[0,1]_{(a,k)}$ to $C[0,1]$ can be extended naturally to a unital linear map $\widetilde{\phi}$ from $C[0,1]$ to $C[0,1]$, given as $\widetilde{\phi}(f)=\lambda+\phi(g)$. Moreover, this algebraic extension needs to be positive.

\begin{definition}\label{th:lsc}
A positive linear map $\phi$ from $C[0,1]_{(a,k)}$ to $C[0,1]$ is called positively extendible if $\widetilde{\phi}$ is still positive.
\end{definition}

First, it is not hard to see that if $\phi$ is positively extendible, then it must be a contraction. However, we point out that
the converse is not true, even in the case $\phi$ is of norm one.
\begin{remark}
 Fix a $x_{0}\in (0,1)$, define a map $\phi$ from $C[0,1]_{(a,k)}$ to $C[0,1]$ as $\phi(g)(x)=g(x_{0})\frac{a+kx}{a+k}$ for any $g\in C[0,1]_{(a,k)}$. It is obvious that $\phi$ is positive and linear. Moreover, $\phi$ has norm one. First, for any $g\in C[0,1]_{(a,k)}$, $\|\phi(g)\|=\sup_{x\in[0,1]}|\phi(g)(x)|=\sup_{x\in[0,1]}|g(x_{0})\frac{a+kx}{a+k}|\leq\|g\|$. Next we choose a function for which the supreme reaches one. Define a function $g_1(x)$ as follows
 $$g_{1}(x)=\begin{cases}
                 \frac{a}{a+k}+\frac{(1-\frac{a}{a+k})x}{x_{0}} & x\in[0,x_{0}] \\
                  1 & x\in(x_{0},1], \\
               \end{cases}$$
    then $\|\phi(g_{1})\|=sup_{x\in[0,1]}|\phi(g_{1})(x)|=sup_{x\in[0,1]}|g_{1}(x_{0})\frac{a+kx}{a+k}|=\|g_1\|=1$.
 However, the natural extension $\widetilde{\phi}$ is not positive.
  Choose $$f(x)=\begin{cases}
                0 & x\in[0,x_{0}] \\
                \frac{k}{1-x_{0}}(x-1)+k & x\in(x_{0},1] ,\\
                 \end{cases}$$
 then $f(x)\geq0$, consider its direct sum decomposition $f=\lambda+g$, where $\lambda=\frac{(a+k)f(0)-af(1)}{k}=-a$ and
 $$g(x)=\begin{cases}
              a & x\in[0,x_{0}] \\
              \frac{k}{1-x_{0}}(x-1)+a+k & x\in(x_{0},1].\\
             \end{cases}$$
 Then  $\widetilde{\phi}(f)(x)=\lambda+\phi(g)=-a+a\frac{a+kx}{a+k}=\frac{ak(x-1)}{a+k}\leq0$.
\end{remark}

Although the constant function 1 is not in $C[0,1]_{(a,k)}$, we still can approximate it using special functions in the subspace, which are referred as test functions. Moreover, the behavior of a positive
linear map on the test functions indicate the positivity of its extension.

\begin{definition}
For each $1\geq\delta>0$, a lower test function $e_{\delta}\in C[0,1]_{(a,k)}$ is a continuous function which has value 1 on $[\delta,1]$ and has value $\frac{a}{k+a}
$ at 0. Denote by $L$ the set of all such lower test functions.
For each  $1>\sigma\geq0$, a upper test function $\gamma_{\sigma}\in C[0,1]_{(a,k)}$ is a continuous function which has value 1 on $[0,1-\sigma]$ and has value $\frac{a+k}{a}$ at 1. Denote by $S$ the set of all such upper test functions.
 \end{definition}

\begin{prop}
Let  there be given a positive linear map $\phi$ from $C[0,1]_{(a,k)}$ to $C[0,1]$, then $\phi$ is positively extendible if and only if the following estimation holds: $\inf\limits_{\gamma\in S}\{\phi(\gamma)\}\geq1\geq \sup\limits_{e\in L}\{\phi(e)\}$ .
\end{prop}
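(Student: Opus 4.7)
The plan is to prove the two implications separately. The forward direction is almost immediate from the direct-sum decomposition in Lemma~\ref{lem:direct}: if $\widetilde\phi$ is positive, then for $e\in L$ (which may be taken pointwise $\le 1$) the function $1-e\ge 0$ decomposes as $1+(-e)$, so $\widetilde\phi(1-e)=1-\phi(e)\ge 0$ and hence $\phi(e)\le 1$; similarly, for $\gamma\in S$ (pointwise $\ge 1$) the function $\gamma-1$ decomposes as $-1+\gamma$, giving $\widetilde\phi(\gamma-1)=\phi(\gamma)-1\ge 0$.

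For the converse, suppose $\phi(\gamma)\ge 1\ge \phi(e)$ for all $\gamma\in S$ and $e\in L$, and let $f\ge 0$ in $C[0,1]$. Write $f=\lambda+g$ with $\lambda=\frac{(a+k)f(0)-af(1)}{k}$ and $g\in C[0,1]_{(a,k)}$; the goal is $\widetilde\phi(f)(x)=\lambda+\phi(g)(x)\ge 0$ for every $x\in[0,1]$. The strategy is to exhibit a non-negative element $h$ of $C[0,1]_{(a,k)}$ with $\phi(h)\le \widetilde\phi(f)$ pointwise; positivity of $\phi$ on the subspace then gives $\widetilde\phi(f)\ge \phi(h)\ge 0$.

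In the case $\lambda\ge 0$ I would take $h=\lambda e+g$ for a suitably chosen $e\in L$. Since $\lambda e+g=f-\lambda(1-e)$ and $1-e$ vanishes on $[\delta,1]$, the condition $h\ge 0$ reduces to $\lambda(1-e)(x)\le f(x)$ on $[0,\delta]$. The crucial boundary identity is $\lambda(1-e)(0)=\frac{k\lambda}{a+k}=f(0)-\frac{a}{a+k}f(1)\le f(0)$, which holds because $f(1)\ge 0$. By continuity of $f$, for $\delta$ small enough one constructs $e$ explicitly; for instance, on $[0,\delta]$ take $1-e(x):=\min\bigl(\frac{k}{a+k}(1-x/\delta),\,f(x)/\lambda\bigr)$ (the case $\lambda=0$ is trivial since then $f=g\in C[0,1]_{(a,k)}$). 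Applying $\phi$ gives $\lambda\phi(e)+\phi(g)\ge 0$; and since $\phi(e)\le 1$ and $\lambda\ge 0$ we have $\lambda\phi(e)+\phi(g)\le \lambda+\phi(g)=\widetilde\phi(f)$, which completes this case.

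The case $\lambda<0$ is handled symmetrically with an upper test function $\gamma\in S$ chosen so that $|\lambda|(\gamma-1)\le f$ on $[1-\sigma,1]$; the dual boundary identity $|\lambda|\frac{k}{a}=f(1)-\frac{a+k}{a}f(0)\le f(1)$ holds because $f(0)\ge 0$, and an analogous local construction near $x=1$ produces the required $\gamma$. Multiplication by $\lambda<0$ flips the inequality $\phi(\gamma)\ge 1$, yielding $\lambda\phi(\gamma)+\phi(g)\le \lambda+\phi(g)=\widetilde\phi(f)$, while the left side is $\ge 0$ by positivity of $\phi$ on $C[0,1]_{(a,k)}$. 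The main obstacle is this boundary interpolation producing $\lambda e+g\ge 0$ (resp.\ $\lambda\gamma+g\ge 0$) while staying in $C[0,1]_{(a,k)}$; it is made feasible precisely by the endpoint formulae for $\lambda$ coming from the direct-sum decomposition.
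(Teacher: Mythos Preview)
Your proof is correct and follows essentially the same approach as the paper's. Both arguments handle the forward direction by comparing test functions to the constant $1$, and for the converse both split on the sign of $\lambda$ and reduce to producing a test function with $g\ge -\lambda e$ (resp.\ $g\ge -\lambda\gamma$); your condition $\lambda e+g\ge 0$ is precisely this inequality. The only difference is that the paper merely asserts such test functions exist, whereas you supply an explicit $\min$-construction together with the boundary checks $\lambda\cdot\frac{k}{a+k}\le f(0)$ and $|\lambda|\cdot\frac{k}{a}\le f(1)$ that justify it.
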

\begin{proof} First suppose $\phi$ is positively extendible, then  $\widetilde{\phi}$ is positive. For any $\gamma\in S, e\in L$, $\gamma>1>e$,$\widetilde{\phi}(\gamma)\geq1\geq\widetilde{\phi}(e)$, then $\phi(\gamma)\geq1\geq\phi(e)$, and $\inf\limits_{\gamma\in S}\{\phi(\gamma)\}\geq1\geq \sup\limits_{e\in L}\{\phi(e)\}$.

Conversely, suppose  $\inf\limits_{\gamma\in S}\{\phi(\gamma)\}\geq1\geq \sup\limits_{e\in L}\{\phi(e)\}$, we need to show $\widetilde{\phi}: C[0,1]\rightarrow C[0,1]$ defined by $\widetilde{\phi}(f)=\lambda+\phi(g)$ is positive. For $f\in C[0,1]$ and $f\geq0$ with the decomposition  $f=\lambda+g $, then we need to show $\widetilde{\phi}(f)=\lambda+\phi(g)\geq0$. Thus we need to prove $\phi(g)\geq-\lambda$ if $g\geq-\lambda$.

$\mathit{Case I:}$ If $\lambda=0$, then $f\in C[0,1]_{(a,k)}$, we have $\widetilde{\phi}(f)=\phi(f)\geq0 $ since $f\geq0$ and $\phi$ is positive.

$\mathit{Case II:}$ If $\lambda<0$, then $-\lambda>0$, then we can find a $\gamma_{\sigma}\in S$ such that $(-\lambda)\gamma_{\sigma}\leq g$. Since $\phi$ is positive, one has that $\phi(g)\geq \phi(-\lambda\gamma_{\sigma})=-\lambda\phi(\gamma_{\sigma})\geq -\lambda$.

$\mathit{Case III:}$ If $\lambda>0$, then $-\lambda<0$, then we can find a $e_{\delta}\in L$ such that $g\geq -\lambda e_{\delta}$. Since $\sup\limits_{0<\delta\leq1}\phi(e_{\delta})\leq1$, one has $-\lambda\phi(e_{\delta})\geq -\lambda$. Therefore, $\phi(g)\geq-\lambda\phi(e_{\delta})\geq-\lambda$.

Hence $\widetilde{\phi}$ is positive and $\phi$ is positively extendible.
\end{proof}

\section{approximation results on $[0,1]$}

Given a Markov operator between $\mathrm{C}[0,1]$ which keeps subspace $\mathrm{C}[0,1]_{(a,k)}$, we want to approximate it by an average of homomorphisms on $\mathrm{C}[0,1]$ and additionally require that the average also keeps the same subspace. Since the subhomogeneity in consideration arise at 0 and 1, we need to investigate the measures induced by point evaluations of a Markov operator at endpoints 0 and 1.
\begin{lem}\label{point mass}
 Let there be given a unital positive linear map $\phi$ on $C[0,1]$ which preserves $C[0,1]_{(a,k)}$. Then the measures induced by evaluations of $\phi$ at 0 and 1 actually concentrate on 0 and 1 respectively, in other words, $\phi(f)(0)=f(0)$ and $\phi(f)(1)=f(1)$ for all $f\in C[0,1]$.
 \end{lem}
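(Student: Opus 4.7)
The plan is to invoke the Riesz representation theorem to reduce the statement to an elementary claim about two probability measures, and then use a single well-chosen test function from $C[0,1]_{(a,k)}$ to force both measures to be Dirac masses.

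First, since $\phi$ is unital and positive, the linear functionals $f \mapsto \phi(f)(0)$ and $f \mapsto \phi(f)(1)$ are states on $C[0,1]$, hence by Riesz representation there exist probability measures $\mu_0$ and $\mu_1$ on $[0,1]$ with
\[
\phi(f)(0)=\int_0^1 f\,d\mu_0, \qquad \phi(f)(1)=\int_0^1 f\,d\mu_1.
\]
The hypothesis that $\phi$ preserves $C[0,1]_{(a,k)}$ translates (setting $\alpha := \tfrac{a}{a+k}$) into the statement that for every $f \in C[0,1]$ with $f(0)=\alpha f(1)$ one has $\int f\,d\mu_0 = \alpha \int f\,d\mu_1$.

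Next I would apply this to the specific test element $e(x) := \alpha + (1-\alpha)x$, which clearly lies in $C[0,1]_{(a,k)}$ since $e(0)=\alpha=\alpha\cdot e(1)$. Writing $m_i := \int_0^1 x\,d\mu_i$, the relation above becomes
\[
\alpha + (1-\alpha)m_0 \;=\; \alpha\bigl[\alpha + (1-\alpha)m_1\bigr],
\]
which simplifies (using $1-\alpha \neq 0$, since $k \geq 1$) to
\[
m_0 \;=\; \alpha(m_1 - 1).
\]

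Now comes the key squeeze: since $\mu_0,\mu_1$ are probability measures on $[0,1]$, we have $0 \le m_0 \le 1$ and $0 \le m_1 \le 1$. The right-hand side $\alpha(m_1-1)$ is nonpositive while $m_0$ is nonnegative, so both sides must equal $0$; hence $m_0=0$ and $m_1=1$. But $\int_0^1 x\,d\mu_0 = 0$ with $\mu_0$ supported in $[0,1]$ forces $\mu_0 = \delta_0$, and symmetrically $\int_0^1 x\,d\mu_1 = 1$ forces $\mu_1 = \delta_1$. Substituting back gives $\phi(f)(0)=f(0)$ and $\phi(f)(1)=f(1)$ for every $f\in C[0,1]$.

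There isn't really a serious obstacle here; the only nontrivial ingredient is noticing that the single linear function $e(x)=\alpha+(1-\alpha)x$ is already enough to pin down both moments simultaneously, because the subhomogeneity constraint couples the two endpoint measures to each other. Everything else is just Riesz representation and a sign check.
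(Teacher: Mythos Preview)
Your proof is correct and noticeably cleaner than the paper's. Both arguments start from Riesz representation and arrive at a squeeze of the form ``nonnegative quantity equals nonpositive quantity,'' but the paper reaches that point by a more roundabout path: it fixes a fine partition $\{X_1,\dots,X_n\}$ of $[0,1]$, chooses a piecewise-linear bump $e_{\delta_0}$ that equals $\tfrac{a}{a+k}$ at $0$ and is $1$ outside a small neighbourhood of $0$, expands $\phi(e_{\delta_0})(0)$ and $\phi(e_{\delta_0})(1)$ against the partition, and from the resulting sign contradiction obtains $\mu_0(X_1)=1$; letting the partition shrink gives $\mu_0(\{0\})=1$. A second pass with an analogous function $\gamma_{\delta_1}$ near the right endpoint handles $\mu_1$. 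Your observation that the single affine function $e(x)=\alpha+(1-\alpha)x$ already lies in $C[0,1]_{(a,k)}$ lets you read off the first moments directly and collapse the entire limiting argument into one line; the conclusion $m_0=\alpha(m_1-1)$ with $m_0\ge 0$ and $m_1\le 1$ immediately pins $\mu_0=\delta_0$ and $\mu_1=\delta_1$. The only thing the paper's heavier approach buys is that its partition-and-bump machinery is reused verbatim in the later lemmas (Lemma~\ref{mea2} and its analogues) where $\alpha\ne\beta$ and the endpoint measures are no longer point masses; your linear test function would not suffice there.
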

\begin{proof} For any fixed $y\in[0,1]$, $f\rightarrow \phi(f)(y)$ gives a positive Borel probability measure on $[0,1]$, say $\mu_{y}$, then $\phi(f)(y)=\int_{0}^{1}fd\mu_{y}$.

Then for all $g\in C[0,1]_{(a,k)}$, $\phi(g)(0)=\int_{0}^{1}gd\mu_{0}$ and $\phi(g)(1)=\int_{0}^{1}gd\mu_{1}$, since $\phi(g)(0)=\frac{a}{a+k}\phi(g)(1)$, one has that $$\int_{0}^{1}gd\mu_{0}=\frac{a}{a+k}\int_{0}^{1}gd\mu_{1}.$$

For any $\delta>0$, choose a finite $\delta$-dense subset $\{x_{1},x_{2},...,x_{n}\}\subset[0,1]$ with $x_{1}=0, x_{n}=1$. Then for every $x\in[0,1]$, there is a $x_{i}$ in the finite subset above such that $dist(x,x_{i})<\delta$. Given a partition of $[0,1]$, denoted by $\{X_{1},X_{2},...,X_{n}\}$, with each $X_{i}$ being a connected Borel set, satisfying the following conditions:
$$\begin{aligned}
&(1) \,x_{i}\in X_{i}, i=1,2,...,n;\\
&(2)\, [0,1]=\bigcup_{i=1}^n X_{i}, X_{i}\bigcap X_{j}=\emptyset\;\text{for}\;i\neq j;\\
&(3) \,dist(x,x_{i})<\delta \;\text{if} \; x\in X_{i}.\\
\end{aligned}$$
For any fixed $\delta$ above, there exists a $\delta_{0}>0$ such that $[0,\delta_{0}]\subseteq X_{1}$. Choose a function $e_{\delta}\in C[0,1]_{(a,k)}$ as follows: $$e_{\delta_{0}}=\begin{cases}
                  \frac{a}{a+k}+\frac{1-\frac{a}{a+k}}{\delta_{0}}x & x\in[0,\delta_{0}] \\
                   1 & x\in(\delta_{0},1].\\
                 \end{cases}
               $$
Then
  \[\phi(e_{\delta_{0}})(0)=\int_{0}^{1}e_{\delta_{0}}d\mu_{0}=\int_{X_{1}}e_{\delta_{0}}d\mu_{0}+\mu_{0}(X_{2})+\mu_{0}(X_{3)}+...+\mu_{0}(X_{n})\]
 \[=\int_{X_{1}}e_{\delta_{0}}d\mu_{0}+1-\mu_{0}(X_{1)}=1+\int_{X_{1}}(e_{\delta_{0}}-1)d\mu_{0},\]
\[ \phi(e_{\delta_{0}})(1)=\int_{0}^{1}e_{\delta_{0}}d\mu_{1}=\int_{X_{1}}e_{\delta_{0}}d\mu_{1}+\mu_{1}(X_{2})+\mu_{1}(X_{3)}+...+\mu_{1}(X_{n})\]
 \[=\int_{X_{1}}e_{\delta_{0}}d\mu_{1}+1-\mu_{1}(X_{1)}=1+\int_{X_{1}}(e_{\delta_{0}}-1)d\mu_{1}.\]
Since \[ \phi(e_{\delta_{0}})(0)=\frac{a}{a+k}\phi(e_{\delta_{0}})(1),\] one gets that \[1+\int_{X_{1}}(e_{\delta_{0}}-1)d\mu_{0}=\frac{a}{a+k}(1+\int_{X_{1}}(e_{\delta_{0}}-1)d\mu_{1}).\] Then
\begin{equation*}\label{sum}
\frac{k}{a+k}+\int_{X_{1}}(e_{\delta_{0}}-1)d\mu_{0}=\frac{a}{a+k}\int_{X_{1}}(e_{\delta_{0}}-1)d\mu_{1}.
\end{equation*}
 Since $\frac{a}{a+k}\leq e_{\delta_{0}}\leq1$, and $\mu_0$ and $\mu_1$ are positive measures, one has that $-\frac{k}{a+k}\leq\int_{X_{1}}(e_{\delta_{0}}-1)d\mu_{0}\leq0$ and $-\frac{k}{a+k}\leq\int_{X_{1}}(e_{\delta_{0}}-1)d\mu_{1}\leq0$. Therefore, $$0\leq\frac{k}{a+k}+\int_{X_{1}}(e_{\delta_{0}}-1)d\mu_{0}\leq\frac{k}{a+k}.$$ Then the left hand side of equation above is $\geq0$ and the right hand side is $\leq0$. Hence $$\frac{k}{a+k}+\int_{X_{1}}(e_{\delta_{0}}-1)d\mu_{0}=\frac{a}{a+k}\int_{X_{1}}(e_{\delta_{0}}-1)d\mu_{1}=0.$$

Then we have $\int_{X_{1}}(1-e_{\delta_{0}})d\mu_{0}=\frac{k}{a+k}$, and $0\leq\int_{X_{1}}(1-e_{\delta_{0}})d\mu_{0}\leq\frac{k}{a+k}\mu_{0}(X_{1})$. Hence $\mu_{0}(X_{1})\geq1$, so $\mu_{0}(X_{1})=1.$ Since $\delta$ is arbitrary, then $\mu_0(\{0\})=1$.

In a similar way, for any fixed $\delta$, there exists a $\delta_{1}$ such that $[\delta_{1},1]\subseteq X_{n}$. One can choose a function $\gamma_{\delta_1}\in C[0,1]_{(a,k)}$ as follows:
\[\gamma_{\delta_{1}}=\begin{cases}
                 1 & x\in[0,\delta_{1}) \\
                   1+\frac{\frac{a+k}{a}-1}{1-\delta_{1}}(x-\delta_{1}) & x\in(\delta_{1},1].\\
                 \end{cases}
               \]
Then
\[\phi(\gamma_{\delta_{1}})(0)=\int_{0}^{1}\gamma_{\delta_{1}}d\mu_{0}=\int_{X_{n}}\gamma_{\delta_{1}}d\mu_{0}+\mu_{0}(X_{1})+\mu_{0}(X_{2)}+...+\mu_{0}(X_{n-1})\]
\[=1-\mu_{0}(X_{n})+\int_{X_{n}}\gamma_{\delta_{1}}d\mu_{0}=1+\int_{X_{n}}(\gamma_{\delta_{1}}-1)d\mu_{0},\]
\[\phi(\gamma_{\delta_{1}})(1)=\int_{0}^{1}\gamma_{\delta_{1}}d\mu_{1}=\int_{X_{n}}\gamma_{\delta_{1}}d\mu_{1}+\mu_{1}(X_{1})+\mu_{1}(X_{2)}+...+\mu_{1}(X_{n-1})\]
\[=1-\mu_{1}(X_{n})+\int_{X_{n}}\gamma_{\delta_{1}}d\mu_{1}=1+\int_{X_{n}}(\gamma_{\delta_{1}}-1)d\mu_{1}.\]
Since \[ \frac{a+k}{a}\phi(\gamma_{\delta_{1}})(0)=\phi(\gamma_{\delta_{1}})(1),\]
one has that \[\frac{a+k}{a}(1+\int_{X_{n}}(\gamma_{\delta_{1}}-1)d\mu_{0})=1+\int_{X_{n}}(\gamma_{\delta_{1}}-1)d\mu_{1},\]
\begin{equation*}\label{sum}
\frac{k}{a}+\frac{a+k}{a}\int_{X_{n}}(\gamma_{\delta_{1}}-1)d\mu_{0}=\int_{X_{n}}(\gamma_{\delta_{1}}-1)d\mu_{1}.
\end{equation*}
While $0\leq\gamma_{\delta_{1}}-1\leq\frac{k}{a}$, and $0\leq\int_{X_{n}}(\gamma_{\delta_{1}}-1)d\mu_{0}\leq\frac{k}{a},\;0\leq\int_{X_{n}}(\gamma_{\delta_{1}}-1)d\mu_{1}\leq\frac{k}{a}$. Then the left hand side of the equation above is $\geq\frac{k}{a}$ and the right hand side is $\leq\frac{k}{a}$.  Hence $$\frac{k}{a}+\frac{a+k}{a}\int_{X_{n}}(\gamma_{\delta_{1}}-1)d\mu_{0}=\int_{X_{n}}(\gamma_{\delta_{1}}-1)d\mu_{1}=\frac{k}{a}.$$
Then we have $$\frac{k}{a}\leq\int_{X_{n}}(\gamma_{\delta_{1}}-1)d\mu_{1}\leq\frac{k}{a}\mu_{1}(X_{n}),$$ then $\mu_{1}(X_{n})\geq1$, so $\mu_{1}(X_{n})=1$. Since $\delta$ is arbitrary, then $\mu_1(\{1\})=1$.

Hence $\phi(f)(0)=f(0)$ and $\phi(f)(1)=f(1)$ for all $f\in C[0,1]$.
\end{proof}
\begin{cor}\label{lem:cont1}
Let $\phi: C[0,1]_{(a,k)}\rightarrow C[0,1]_{(a,k)}$ be defined as $\phi(f)=f\circ\lambda$ for some continuous $\lambda:[0,1]\rightarrow[0,1]$, then $\lambda(0)=0$ and $\lambda(1)=1$.
\end{cor}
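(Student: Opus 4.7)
The plan is to reduce the corollary to a direct application of Lemma \ref{point mass}. Although $\phi$ is only defined on the subspace $C[0,1]_{(a,k)}$, the formula $f\mapsto f\circ\lambda$ makes sense for every $f\in C[0,1]$, so it yields a natural extension $\widetilde{\phi}:C[0,1]\to C[0,1]$ given by the same recipe. This extension is obviously linear; it is unital since $1\circ\lambda=1$; and it is positive because $f\geq 0$ implies $f\circ\lambda\geq 0$. Thus $\widetilde{\phi}$ is a unital positive linear map on $C[0,1]$.

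Next I would observe that $\widetilde{\phi}$ preserves $C[0,1]_{(a,k)}$: for $f\in C[0,1]_{(a,k)}$ we have $\widetilde{\phi}(f)=\phi(f)\in C[0,1]_{(a,k)}$ by hypothesis on $\phi$. Hence Lemma \ref{point mass} applies to $\widetilde{\phi}$, yielding $\widetilde{\phi}(f)(0)=f(0)$ and $\widetilde{\phi}(f)(1)=f(1)$ for every $f\in C[0,1]$. Rewriting the left-hand sides through the composition, we get $f(\lambda(0))=f(0)$ and $f(\lambda(1))=f(1)$ for every $f\in C[0,1]$.

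Finally, specializing to a function that separates points of $[0,1]$ (for instance $f(x)=x$) forces $\lambda(0)=0$ and $\lambda(1)=1$. There is no real obstacle here; the only point that deserves attention is the verification that the extension by the same formula indeed preserves the subspace, which follows automatically from the fact that $\phi$ was already assumed to land in $C[0,1]_{(a,k)}$. Everything else is a direct reading of Lemma \ref{point mass}.
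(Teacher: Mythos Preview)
Your proof is correct and follows essentially the same approach as the paper: extend $\phi$ to all of $C[0,1]$ by the same composition formula, apply Lemma~\ref{point mass}, and then evaluate at an injective function to read off $\lambda(0)=0$ and $\lambda(1)=1$. The paper's one-line proof (``Choose some injective function $f$ and apply the lemma above'') leaves the extension step implicit, whereas you spell it out; otherwise the arguments coincide.
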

\begin{proof} Choose some injective function $f$ and apply the lemma above.
\end{proof}

\textbf{Next we proceed to prove Theorem \ref{thm1}:}

\begin{proof} The proof is inspired by Li's proof in \cite{Li}, the thing is we need more accurate analysis for the endpoints. We spell it out in full detail for the convenience of readers.

$\mathit{Step I}.$ For all  $f\in F$ and $y\in [0,1]$, we approximate $\phi(f)(y)$ by a finite sum of evaluations of $f$ with continuous coefficients.

For any $\epsilon>0$, there is a $\delta_{0}$ such that for any $x_{1},x_{2}\in[0,1]$, if $dist(x_{1},x_{2})<\delta_{0}$, then $$|f(x_{1})-f(x_{2})|<\frac{\epsilon}{4}$$ for all $f\in F.$
Choose a finite subset $\{x_{1},x_{2},...,x_{n}\}\subset[0,1]$ which is $\delta_{0}$-dense in $[0,1]$ with $x_{1}=0,x_{n}=1$. Then for every $x\in[0,1]$, there is a $x_{i}$ in the finite subset such that $dist(x,x_{i})<\delta_{0}$. Choose a partition of $[0,1]$, denoted by $\{X_{1},X_{2},...,X_{n}\}$, with each $X_{i}$ being a connected Borel set, satisfying the following conditions:
$$\begin{aligned}
&(1)\, x_{i}\in X_{i}, i=1,2,...,n;\\
&(2)\, [0,1]=\bigcup_{i=1}^n X_{i},X_{i}\bigcap X_{j}=\emptyset \;\text{for}\; i\neq j;\\
&(3)\, dist(x,x_{i})<\delta_{0} \;\text{if}\; x\in X_{i}.\\
\end{aligned}$$
Then for any probability measure $\mu$ on $[0,1]$, there are non-negative numbers $\lambda_{1},\lambda_{2},...,\lambda_{n}$ with $\sum_{i=1}^{n}\lambda_{i}=1$ such that
\[|\mu(f)-\sum_{i=1}^{n}\lambda_{i}f(x_{i})|<\frac{\varepsilon}{4}\;\text{for all}\;f\in F.\]
Actually, we have\[|\mu(f)-\sum_{i=1}^{n}\mu(X_{i})f(x_{i})|<|\sum_{i=1}^{n}\int_{X_{i}}(f(x)-f(x_{i}))d\mu|\leq\sum_{i=1}^{n}\frac{\epsilon}{4}\mu(X_{i})=\frac{\epsilon}{4}.\]
So one may choose $\lambda_{i}=\mu(X_{i})$.

For any fixed  $y\in [0,1],\;f\rightarrow\phi(f)(y)$ is a probability measure on [0,1], thus from above, there are non-negative numbers $\lambda_{1y},\lambda_{2y},...,\lambda_{ny}$ with $\sum_{i=1}^{n}\lambda_{iy}=1$ such that \[|\phi(f)(y)-\sum_{i=1}^{n}\lambda_{iy}f(x_{i})|<\frac{\epsilon}{4}\;\text{for all}\;f\in F.\]
By continuity of $\phi(f)$, this estimation holds in a neighborhood of y. Since $[0,1]$ is compact, we can find an open cover $\{V_{j}: j=1,2,...,R\}$ of $[0,1]$, such that
$$\begin{aligned}
&(1) 0\in V_{1}, 0\notin \bigcup^{R}_{j=2}V_{j},1\in V_{R},1\notin\bigcup^{R-1}_{j=1} V_{j},\\
&(2) y_{j}\in V_{j}, y_{1}=0,y_{R}=1, j=2,...,R-1.\\
\end{aligned}$$
Then one has that

\[|\phi(f)(y)-\sum_{i=1}^{n}\lambda_{iy_{j}}f(x_{i})|<\frac{\epsilon}{4}\;\text{for all}\;y\in V_{j}\;and\;f\in F.\]
Let $\{h_{j}\}_{1}^{R}$ be a partition of unity subordinate to $\{V_{j}\}_{1}^{R}$. Define $\lambda_{i}(y)=\sum_{j=1}^{R}\lambda_{iy_{j}}h_{j}(y)$, then $\lambda_{i}\in C[0,1], \lambda_{i}(0)=\lambda_{i0}=\mu_{0}(X_{i}), \lambda_{i}(1)=\lambda_{i1}=\mu_{1}(X_{i})$ and

\[\sum_{i=1}^{n}\lambda_{i}(y)=\sum_{i=1}^{n}(\sum_{j=1}^{R}\lambda_{iy_{j}}h_{j}(y))=\sum_{j=1}^{R}h_{j}(y)=1.\]
Hence \[|\phi(f)(y)-\sum_{i=1}^{n}\lambda_{i}(y)f(x_{i})|<\frac{\epsilon}{4}\] for all $y\in [0,1]$ and $f\in F$.

$\mathit{Step II}.$ We approximate the finite sum of evaluations above by a linear map $w$ on $C[0,1]$ defined as an integral of the composition with some continuous function $h(y,t)$ from $[0,1]\times[0,1]$ to $[0,1]$.

Let there be given a $\delta>0$ to be used later with $4n\delta\sup_{f\in F}\|f\|<\frac{\epsilon}{4}$.

First, we define continuous maps $G_{0},G_{1},...G_{n}:[0,1]\rightarrow[0,1]$ by
\[ G_{0}(y)=0,G_{j}(y)=\sum_{i=1}^j\lambda_{i}(y),\;j=1,2,...,n.\] For each $y\in [0,1]$, these points $\{G_i(y)\}_i$ give to a partition of $[0,1]$. Moreover, for each $j$, we define
\[f_{j}(y)=\min\{G_{j-1}(y)+\delta;\frac{G_{j-1}(y)+G_{j}(y)}{2}\},\] and
\[g_{j}(y)=\max\{G_{j}(y)-\delta;\frac{G_{j-1}(y)+G_{j}(y)}{2}\}.\]

To define $h(y,t)$, we only need to define $h(y,t)$ on each $[0,1]\times[G_{j-1}(y),G_{j}(y)]$, let us denote by $h_{j}(y,t)$ this restriction.
For our purpose, we choose the following $h_j(y,t)$:
\[h_{j}(y,t)=\begin{cases}
                  \frac{x_{j}(t-G_{j-1}(y))}{\delta} & t\in [G_{j-1}(y),f_{j}(y)] \\
                  \min(x_{j},\frac{x_{j}(G_{j}(y)-G_{j-1}(y))}{2\delta}) & t\in [f_{j}(y),g_{j}(y)] \\
                  \frac{x_{j}(G_{j}(y)-t)}{\delta} & t\in [g_{j}(y),G_{j}(y)]. \\
                 \end{cases}
               \]
Then $h_{j}(y,t)$ satisfies that for any $y\in [0,1],$
\[|h_{j}(y,t_{1})-h_{j}(y,t_{2})|\leq\frac{x_{j}|t_{1}-t_{2}|}{\delta},\]
and $h_{j}(y,t):[0,1]\times[G_{j-1}(y),G_{j}(y)]\rightarrow[0,1]$ is continuous.
Hence $h(y,t):[0,1]\times[0,1]\rightarrow[0,1]$ is continuous and
\[|h(y,t_{1})-h(y,t_{2})|\leq\frac{|t_{1}-t_{2}|}{\delta}.\]

Define $w:C[0,1]\rightarrow C[0,1]$ by $w(f)(y)=\int_{0}^{1}f(h(y,t))dt$ for $f\in C[0,1], y\in[0,1]$.

Then for all $f\in F$, one has that
\[
\begin{aligned}
&|\sum_{i=1}^n\lambda_{i}(y)f(x_{i})-w(f)(y)|\\
=& |\sum_{i=1}^n\int_{G_{i-1}(y)}^{G_{i}(y)}f(x_{i})dt-\int_{0}^{1}f(h(y,t))dt|\\
\leq&\sum_{i=1}^n|\int_{G_{i-1}(y)}^{G_{i}(y)}f(x_{i})-f(h(y,t))dt|\\
=&\sum_{i=1}^n|(\int_{G_{i-1}(y)}^{f_{i}(y)}+\int_{f_{i}(y)}^{g_{i}(y)}+\int_{g_{i}(y)}^{G_{i}(y)})(f(x_{i})-f(h(y,t)))dt|\\
\leq&\sum_{i=1}^n(2\delta\sup_{f\in F}\|f\|+0+2\delta\sup_{f\in F}\|f\|)\\
=&4n\delta\sup_{f\in F}\|f\|<\frac{\epsilon}{4}.
\end{aligned}
\]
$\mathit{Step III}.$ Finally we shall choose $N$ continuous maps on $[0,1]$ to define the homomorphisms. Such maps come from $h(y,t)$ by specifying $N$ values of $t$.
Moreover, we choose these maps in a way that the corresponding homomorphisms preserve the subspace $C[0,1]_{(a,k)}$.

Choose an integer $N_{1}>0$ with $\frac{1}{N_{1}}<\delta\delta_{0}$, and choose specified values of $t$ as $t_{j}=\frac{j}{N_{1}}\in[0,1], j=1,2,...,N_{1}$. Then the linear map $w$ can be approximated by the average of the homomorphisms induced by $h(y,t_j), j=1,...,N_1$. 

This is shown as follows:
set $$w(f)(y)=\sum_{j=1}^{N_{1}}\int_{t_{j-1}}^{t_{j}}f(h(y,t))dt,$$
then 
\[
\begin{aligned}
&|w(f)(y)-\frac{1}{N_{1}}\sum_{j=1}^{N_{1}}f(h(y,t_{j}))|\\
=&| \sum_{j=1}^{N_{1}}\int_{t_{j-1}}^{t_{j}}f(h(y,t))-f(h(y,t_{j}))dt|\\
<& \sum_{j=1}^{N_{1}}\int_{t_{j-1}}^{t_{j}}\frac{\epsilon}{4}=\frac{\epsilon}{4}.
\end{aligned}
\]
for all $f\in F$, where $|f(h(y,t))-f(h(y,t_{j}))|<\frac{\epsilon}{4}$, since $|h(y,t)-h(y,t_{j})|<\delta_{0}$ (Note that $|t-t_{j}|\leq\frac{1}{N_{1}}<\delta\delta_{0}$).

However, the average of these homomorphisms may not preserve the subspace. To fix this problem, we have to make more delicate choices. 

By Lemma \ref{point mass}, we know that for $\phi(f)(0)$, the coefficients are: $$\lambda_1(0)=1, \lambda_2(0)=...=\lambda_n(0)=0.$$ Similarly, for $\phi(f)(1)$, one has that $$\lambda_1(1)= \lambda_2(1)=...=\lambda_{n-1}(1)=0, \lambda_n(1)=1.$$ Therefore, one sets that $h(0,t)=0$; and $$h(1,t)=\begin{cases}
                   \frac{t}{\delta} & t\in [0,\delta] \\
                   1 & t\in [\delta,1-\delta] \\
                  \frac{1-t}{\delta} & t\in[1-\delta,1] .\\
                 \end{cases}$$
If we can choose new $t_j$ such that $h(0,t_j)=0$ and $h(1,t_j)=1$, then the corresponding homomorphisms will fit our purpose. Choose those $j$ such that $\delta\leq\frac{j}{N_1}\leq1-\delta$, i.e., $\delta N_1\leq j\leq (1-\delta)N_1$. Denote by $N$ the number of such $j$, then $N=\lfloor(1-\delta)N_1\rfloor-\lceil\delta N_1\rceil+1$.

We are going to show that the average of these $N$ homomorphisms can approximate the average of the original $N_1$ homomorphisms:
$$\begin{aligned}
&|\frac{1}{N_{1}}\sum_{j=1}^{N_{1}}f(h(y,t_{j}))-\frac{1}{N}\sum_{j=\lceil\delta N_1\rceil}^{\lfloor(1-\delta)N_1\rfloor}f(h(y,t_{j}))|\\
=&\frac{1}{N_{1}}|\sum_{j=1}^{N_{1}}f(h(y,t_{j}))-\frac{N_1}{N}\sum_{j=\lceil\delta N_1\rceil}^{\lfloor(1-\delta)N_1\rfloor}f(h(y,t_j))|\\
=&\frac{1}{N_{1}}|\sum_{j=1}^{\lceil N_{1}\delta\rceil-1}f(h(y,t_{j}))+\sum_{\lfloor N_{1}(1-\delta)\rfloor+1}^{N_{1}}f(h(y,t_{j}))+(1-\frac{N_1}{N})\sum_{j=\lceil N_{1}\delta\rceil}^{\lfloor N_{1}(1-\delta)\rfloor}f(h(y,t_{j}))|\\
\leq&\frac{1}{N_{1}}(N_{1}\delta \sup_{f\in F}\|f\|+(N_{1}\delta+1) \sup_{f\in F}\|f\|+(2N_{1}\delta+1) \sup_{f\in F}\|f\|)\\
\leq&5\delta \sup_{f\in F}\|f\|\leq\frac{\epsilon}{4}.\\
\end{aligned}$$
(Note that the above estimation holds since
$$\begin{aligned}
&N_1-N=N_1-(\lfloor(1-\delta)N_1\rfloor-\lceil\delta N_1\rceil+1)\\
=&N_1-1-(\lfloor(1-\delta)N_1\rfloor-\lceil\delta N_1\rceil)\\
\leq &N_1-1-[(1-\delta)N_1-1-(\delta N_1+1)]\\
=&2\delta N_1+1.)\\
\end{aligned}$$
For those new $j$, let us define $\phi_{j}:C[0,1]\rightarrow C[0,1]$ by $\phi_{j}(f(y))=f(h(y,t_{j}))$. Then
$$\begin{aligned}
&|\phi(f)(y)-\frac{1}{N}\sum_{j=\lceil\delta N_1\rceil}^{\lfloor(1-\delta)N_1\rfloor}\phi_{j}(f)(y)|\\
=&|\phi(f)(y)-\frac{1}{N}\sum_{j=\lceil\delta N_1\rceil}^{\lfloor(1-\delta)N_1\rfloor}f(h(y,t_{j}))|\\
\leq&|\phi(f)(y)-\sum_{i=1}^{n}\lambda_{i}(y)f(x_{i})|+|\sum_{i=1}^n\lambda_{i}(y)f(x_{i})-w(f)(y)|\\
+&|w(f)(y)-\frac{1}{N_{1}}\sum_{j=1}^{N_{1}}f(h(y,t_{j}))|+|\frac{1}{N_{1}}\sum_{j=1}^{N_{1}}f(h(y,t_{j}))-\frac{1}{N}\sum_{j=\lceil\delta N_1\rceil}^{\lfloor(1-\delta)N_1\rfloor}f(h(y,t_{j}))|\\
\leq&\frac{\epsilon}{4}+\frac{\epsilon}{4}+\frac{\epsilon}{4}+\frac{\epsilon}{4}\leq\epsilon.\\
\end{aligned}$$
\end{proof}

\begin{remark} From the proof above, one can see that for any integer $M_1\geq N_1$, there are an corresponding integer $M$ and $M$ homomorphisms such that the average of these $M$ morphisms also meets the requirements.

Hence, under the assumption of Theorem \ref{thm1}, there is a sequence of positive integers $\{L_j\}_{j=1}^{\infty}$ with large enough lower bound, and there are corresponding $L_j$ morphisms for each $j$, such that the average of these $L_j$ morphisms meets the requirements.

\end{remark}

Consider the C*-algebras $$A=\{f\in C([0,1], M_n)\mid f(0)=\text{diag}(d\otimes id_{a}, 0\otimes id_{k}) , f(1)=d\otimes id_{a+k}\}$$ and $$B=\{f\in C([0,1], M_m)\mid f(0)=\text{diag}(e\otimes id_{a}, 0\otimes id_{k}) , f(1)=e\otimes id_{a+k}\},$$ where $d$ and $e$ are matrices of appropriate sizes. Theorem \ref{thm1} can be used to build up $*$-homomorphisms between certain C*-algebras.

\begin{cor} Consider the C*-algebras $A$ as above, for any $\epsilon>0$, and any finite subset $F\subseteq AffTA=C[0,1]_{(a,k)}$, there is an integer $N>0$, such that for any C*-algebra $B$ of the form above with generic fibre size being $N$ multiples of generic fibre size of $A$, and a unital positive linear map $\xi$ on $C[0,1]$ which preserves $C[0,1]_{(a,k)}$, then there is a $*$-homomorphism $\phi$ from $A$ to $B$, such that $$\|\xi(f)-AffT\phi(f)\|<\epsilon$$ for all $f\in F$.
\end{cor}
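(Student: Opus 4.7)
The plan is to apply Theorem \ref{thm1} to produce $N$ eigenvalue maps with the correct endpoint behavior, and then assemble them into a $\ast$-homomorphism $\phi:A\to B$ by diagonal embedding followed by conjugation by a continuous path of unitaries. First, invoke Theorem \ref{thm1} with the given $F$ and $\epsilon$ to obtain the integer $N$. Then, for any prescribed $\xi$ and any $B$ of the required form, a second application of Theorem \ref{thm1} yields $N$ homomorphisms $\phi_{1},\ldots,\phi_{N}:C[0,1]\to C[0,1]$, necessarily of the shape $\phi_{i}(g)=g\circ\lambda_{i}$ for continuous $\lambda_{i}:[0,1]\to[0,1]$, satisfying $\|\xi(g)-\tfrac{1}{N}\sum_{i}\phi_{i}(g)\|<\epsilon$ for all $g\in F$. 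A crucial feature built into the proof of Theorem \ref{thm1} --- the choice of $t_{j}\in[\delta,1-\delta]$ together with $h(0,\cdot)\equiv 0$ and $h(1,\cdot)\equiv 1$ on that subinterval --- is that each $\lambda_{i}$ automatically satisfies $\lambda_{i}(0)=0$ and $\lambda_{i}(1)=1$.

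Next, I would construct $\phi$ explicitly. Write $d\in M_{r}$, so $n=r(a+k)$ and the fibre-size assumption forces $m=Nn=Nr(a+k)$; take $e=\text{diag}(d,\ldots,d)\in M_{Nr}$ ($N$ copies), and define
\[ \tilde\phi(f)(y)=\bigoplus_{i=1}^{N}f(\lambda_{i}(y)),\qquad f\in A,\ y\in[0,1]. \]
Because $\lambda_{i}(0)=0$, the matrix $\tilde\phi(f)(0)$ is $N$ copies of $\text{diag}(d\otimes id_{a},0\otimes id_{k})$, and a permutation matrix $u_{0}\in M_{m}$ rearranges it into the standard form $\text{diag}(e\otimes id_{a},0\otimes id_{k})$; similarly at $y=1$ a permutation $u_{1}$ carries $N$ copies of $d\otimes id_{a+k}$ to $e\otimes id_{a+k}$. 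Since $U(m)$ is path-connected, I interpolate $u_{0}$ and $u_{1}$ by a continuous path $u:[0,1]\to U(m)$ and set $\phi(f)(y)=u(y)\tilde\phi(f)(y)u(y)^{*}$, producing a well-defined $\ast$-homomorphism $\phi:A\to B$.

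Finally, match the affine-function calculation to Theorem \ref{thm1}. Under the identifications $AffTA\cong C[0,1]_{(a,k)}$ via $g(t)=\operatorname{tr}(f(t))/n$ and analogously for $B$ with $m$ in place of $n$, conjugation invariance of trace together with $m=Nn$ yields
\[ AffT\phi(g)(y)=\frac{1}{Nn}\sum_{i=1}^{N}\operatorname{tr}\bigl(f(\lambda_{i}(y))\bigr)=\frac{1}{N}\sum_{i=1}^{N}g(\lambda_{i}(y))=\frac{1}{N}\sum_{i=1}^{N}\phi_{i}(g)(y), \]
so the estimate of Theorem \ref{thm1} immediately gives $\|\xi(g)-AffT\phi(g)\|<\epsilon$ for every $g\in F$.

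The hard part is purely bookkeeping at the endpoints: one must check that the two permutations $u_{0}$ and $u_{1}$ really do put the boundary forms of $\tilde\phi(f)$ into the standard forms dictated by $B$, and note that any continuous unitary path connecting them leaves the trace calculation above undisturbed (because trace is conjugation-invariant, the particular choice of $u(y)$ on the interior is irrelevant for the affine-function comparison). All the nontrivial analytic content --- namely, the existence of eigenvalue maps $\lambda_{i}$ which both preserve the subspace $C[0,1]_{(a,k)}$ on average and pin down $0$ and $1$ at the endpoints --- has already been packaged inside Theorem \ref{thm1}.
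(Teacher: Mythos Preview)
Your proposal is correct and follows essentially the same route as the paper: invoke Theorem~\ref{thm1} to obtain $N$ eigenvalue maps $h(y,t_j)$ which, by the explicit construction in that proof, fix $0$ and $1$; form the diagonal embedding; conjugate by a continuous unitary path joining the two endpoint permutations; and read off the tracial estimate. Your write-up is in fact more explicit than the paper's about the dimensions, the form of $e$, and the trace computation, but the argument is the same.
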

\begin{proof}
We take $N$ as in Theorem \ref{thm1}, and the corresponding $N$ continuous maps $h(y, t_1),..., h(y, t_N)$ on $[0,1]$. By the constructions of $h(y, t_j)$, there are unitary matrices $U_0$ and $U_1$, such that for each $g\in A$, $$U_0\text{diag}(g(h(0, t_1)),...,g(h(0, t_N)))U_0^*=\text{diag}(e\otimes id_{a}, 0\otimes id_{k}),$$ and $$U_1\text{diag}(g(h(1, t_1)),...,g(h(1, t_N)))U_1^*=e\otimes id_{a+k},$$ for some matrix $e$. By choosing a continuous path of unitaries $U(t)$ connecting $U_0$ and $U_1$, one can define $\phi: A\rightarrow B$ as $$\phi(g)(y)=U(y)\text{diag}(g\circ h(y, t_1),...,g\circ h(y, t_N))U^*(y).$$ Keeping mind the correspondence $f=(tr\otimes \delta_t)(g)$(see Proposition 2.1 in \cite{Rak}), and applying Theorem \ref{thm1}, one has that $$\|\xi(f)-AffT\phi(f)\|<\epsilon$$ for all $f\in F$.

\end{proof}

Now we consider the case involving different subspaces.

\begin{lem}\label{small} Let $\mu$ be any Borel probability measure on [0,1], then for any $x\in[0,1]$ and $\epsilon>0$, there is a $\delta>0$, such that $\mu(B^{0}(x,\delta))<\epsilon$ (where $B^{0}(x,\delta)$ denotes the neighborhood of $x$ excluding the center).
\end{lem}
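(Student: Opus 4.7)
The plan is to derive this as a direct consequence of continuity from above of a finite measure. Since $\mu$ is a probability measure on the compact interval $[0,1]$, for any decreasing sequence of Borel sets with empty intersection, their measures tend to zero.

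I would fix $x \in [0,1]$ and $\epsilon > 0$, pick any sequence $\delta_n \downarrow 0$, and consider the punctured neighborhoods $E_n = B^{0}(x,\delta_n) \cap [0,1]$. These are Borel sets, they decrease in $n$, and --- crucially --- their intersection is empty: indeed $\bigcap_n (x - \delta_n,\, x + \delta_n) = \{x\}$, but each $E_n$ explicitly excludes the center $x$. Since $\mu([0,1]) = 1 < \infty$, continuity from above yields $\mu(E_n) \searrow \mu(\emptyset) = 0$. I then pick an $n$ large enough that $\mu(E_n) < \epsilon$ and set $\delta = \delta_n$.

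The argument is essentially a one-line application of countable additivity of $\mu$, so there is no genuine obstacle. The only point worth underlining is precisely the reason the lemma is stated with the \emph{punctured} neighborhood: were the center $x$ included, the measure would always be bounded below by $\mu(\{x\})$, which may be strictly positive if $\mu$ has an atom at $x$. Removing $x$ is exactly what allows the conclusion to hold uniformly over all Borel probability measures --- including those with a point mass at $x$, a case which is in fact relevant here in view of Lemma \ref{point mass}.
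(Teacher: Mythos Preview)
Your argument is correct. Both proofs rest on countable additivity of the finite measure $\mu$, but the paper packages it differently: it decomposes the punctured ball into disjoint annuli $D_k = (B(x,1/k)\setminus B(x,1/(k+1)))\cap[0,1]$, observes that the partial sums $S_n=\sum_{k=1}^n\mu(D_k)$ are increasing and bounded, hence convergent, and then takes $\delta=1/N$ once the tail $\sum_{k\ge N}\mu(D_k)$ is below $\epsilon$. Your route via continuity from above on the nested sequence $E_n=B^0(x,\delta_n)\cap[0,1]$ is more direct and avoids the annular decomposition; the paper's version makes the underlying series structure explicit but is otherwise equivalent. Your remark about why the puncture is essential (to accommodate atoms at $x$, as in Lemma~\ref{point mass}) is a useful observation that the paper does not spell out.
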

\begin{proof}Let $D_{k}=(B(x,\frac{1}{k})\setminus B(x,\frac{1}{1+k}))\cap[0,1]$ and $S_{n}=\sum_{k=1}^{n}\mu(D_{k})$. Then $S_{n}$ is increasing and upper bounded, so $\{S_{n}\}$ converges. Then for any $\epsilon>0$, there exists $N>0$ such that $\sum_{k=N}^{\infty}\mu(D_{k})<\epsilon$. Hence there exists $\delta=\frac{1}{N}$, such that $\mu(B^{0}(x,\delta))<\epsilon$, since $B^{0}(x,\delta)=\bigcup\limits_{k=N}^{\infty}D_k$.
The proof is complete.
\end{proof}
Examples in Section 2 show that the measures induced by evaluations of a Markov operator at 0 and 1 actually could involve as many points as you want, so we investigate the behavior of induced measures with respect to a given partition of $[0,1]$ coming from an approximation.

\begin{lem}\label{mea2}
Given any unital positive linear map $\phi$ from $C[0,1]$ to $C[0,1]$ which sends $C[0,1]_{(a,k)}$ to $C[0,1]_{(b,k)}$, denote by $\mu_0$ and $\mu_1$ the measures induced by evaluations of $\phi$ at 0 and 1. Let there be given a partition $\{X_{1},X_{2},...,X_{n}\}$ of $[0,1]$, where $X_{i}$ is a connected Borel set and $0\in X_{1}$, $1\in X_{n}$.

Then we have the following distribution of $\mu_0$ and $\mu_1$ with respect to the partition:
\begin{align}
&\mu_{0}(X_{i})=\frac{b}{b+k}\mu_{1}(X_{i}), i=2,...,n-1, \\
&\frac{a}{a+k}\mu_{0}(X_{1})+\mu_{0}(X_{n})=\frac{b}{b+k}[\frac{a}{a+k}\mu_{1}(X_{1})+\mu_{1}(X_{n})].
\end{align}
\end{lem}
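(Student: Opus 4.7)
The strategy is to test the defining relation $\phi(g)\in C[0,1]_{(b,k)}$—which, in terms of the induced measures, reads
\[
\int_{0}^{1} g\,d\mu_{0}\;=\;\frac{b}{b+k}\int_{0}^{1} g\,d\mu_{1}
\]
—against two carefully chosen continuous functions $g\in C[0,1]_{(a,k)}$. For the first identity I would take $g\approx\mathbf{1}_{X_i}$ with $g(0)=g(1)=0$, so the boundary constraint $g(0)=\tfrac{a}{a+k}g(1)$ is trivially satisfied; for the second identity I would take $g\approx\tfrac{a}{a+k}\mathbf{1}_{X_1}+\mathbf{1}_{X_n}$ with $g(0)=\tfrac{a}{a+k}$ and $g(1)=1$, so the boundary constraint holds again.

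The central step in each case is to build a continuous approximant $g$ whose disagreement with the target linear combination of indicators lies entirely in punctured neighborhoods of the inter-partition boundary points. Concretely, for the first identity, let $p,q$ be the (at most two) interior boundary points of $X_i$; I would set $g(p)=\mathbf{1}_{X_i}(p)$, $g(q)=\mathbf{1}_{X_i}(q)$, declare $g$ equal to $\mathbf{1}_{X_i}$ on $[0,1]\setminus(B^{0}(p,\delta)\cup B^{0}(q,\delta))$, and interpolate linearly across each punctured $\delta$-neighborhood. Then $g\in C[0,1]_{(a,k)}$, $\|g\|_\infty\le 1$, and
\[
\Bigl|\textstyle\int_{0}^{1}g\,d\mu_{j}-\mu_{j}(X_i)\Bigr|\;\le\;\mu_{j}(B^{0}(p,\delta))+\mu_{j}(B^{0}(q,\delta))\qquad(j=0,1).
\]
Choosing $\delta$ small via Lemma \ref{small} and letting $\delta\to 0$ in the displayed identity yields the first identity. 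The construction for the second identity is entirely parallel: apply Lemma \ref{small} simultaneously at every inter-partition boundary point $p_2,\dots,p_n$ to pick a common $\delta>0$, and build $g$ taking the value $\tfrac{a}{a+k}$ on $X_1$ away from $p_2$, the value $0$ on each $X_j$ ($2\le j\le n-1$) away from its endpoints, and the value $1$ on $X_n$ away from $p_n$, with linear transitions in the $\delta$-neighborhoods and the exact target value assigned at each boundary point itself.

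The main obstacle is that $\mu_0$ or $\mu_1$ may carry atoms at the inter-partition points, while Lemma \ref{small} only bounds the mass of the \emph{punctured} neighborhood and says nothing about the point itself. This is exactly overcome by the freedom to prescribe $g$ at any single point: by forcing $g$ to equal the target indicator at each boundary point, atomic mass there is counted identically on both sides of the estimate, so only the punctured-neighborhood mass contributes error, and Lemma \ref{small} makes that error arbitrarily small. Once this observation is in place, the remainder of the argument is routine uniform-approximation bookkeeping of the same flavor as the analysis at $0$ and $1$ carried out in Lemma \ref{point mass}.
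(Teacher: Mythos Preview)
Your proposal is correct and follows essentially the same approach as the paper: test the relation $\int g\,d\mu_0 = \tfrac{b}{b+k}\int g\,d\mu_1$ against a continuous bump $g\approx\mathbf{1}_{X_i}$ for the first identity and against $g\approx\tfrac{a}{a+k}\mathbf{1}_{X_1}+\mathbf{1}_{X_n}$ for the second, invoking Lemma~\ref{small} to make the transition-region error arbitrarily small. Your device of prescribing $g$ to equal the target indicator exactly at each inter-partition boundary point gives a clean unified treatment of what the paper handles by a case split on whether each endpoint of $X_i$ lies in $X_i$ (the paper spells out only the closed case $X_i=[b_i,a_i]$ and asserts the others are similar).
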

\begin{proof} First we prove the first relation. This is shown as follows. Choose a continuous function which is almost supported on $X_i$, then apply $\phi$, and compare the  evaluations at 0 and 1, one can get the relation.

For $i=2,...,n-1$, let us assume $a_{i}=\sup\{x: x\in X_{i}\}$, $b_{i}=\inf\{x: x\in X_{i}\}$. We deal with the problem in several cases, and we only spell it out in one case, similar proof works for other cases.

If $a_{i},b_{i}\in X_{i}$, that is $X_{i}=[b_{i}, a_{i}]$. For $\forall \epsilon>0$, by Lemma \ref{small}, there exists a $\delta>0$ such that $\mu_{0}((b_{i}-\delta,b_{i}))<\epsilon, \mu_{1}((b_{i}-\delta,b_{i}))<\epsilon, \mu_{0}((a_{i},a_{i}+\delta))<\epsilon, \mu_{1}((a_{i},a_{i}+\delta))<\epsilon$. Choose a function $g_{i}(x)$ as follows: \[ g_{i}(x)=\begin{cases}
                 0 & x\in(b_{i}-\delta,a_{i}+\delta)^{c} \\
                   \frac{x-b_{i}}{\delta}+1 & x\in(b_{i}-\delta,b_{i}) \\
                   -\frac{x-a_{i}}{\delta}+1 & x\in(a_{i},a_{i}+\delta) \\
                     1 & x\in[b_{i},a_{i}],\\
                 \end{cases}
               \] then \[\phi(g_{i})(0)=\int_{[0,1]}g_{i}d\mu_{0}=\mu_{0}(X_{i})+\int_{(b_{i}-\delta,b_{i})}g_{i}d\mu_{0}+\int_{(a_{i},a_{i}+\delta)}g_{i}d\mu_{0},\] and \[\phi(g_{i})(1)=\int_{[0,1]}g_{i}d\mu_{1}=\mu_{1}(X_{i})+\int_{(b_{i}-\delta,b_{i})}g_{i}d\mu_{1}+\int_{(a_{i},a_{i}+\delta)}g_{i}d\mu_{1}.\]
Let us take $$\epsilon_{i0}=\int_{(b_{i}-\delta,b_{i})}g_{i}d\mu_{0}+\int_{(a_{i},a_{i}+\delta)}g_{i}d\mu_{0},$$ and $$
\epsilon_{i1}=\int_{(b_{i}-\delta,b_{i})}g_{i}d\mu_{1}+\int_{(a_{i},a_{i}+\delta)}g_{i}d\mu_{1},$$ then $\epsilon_{i0}<2\epsilon, \epsilon_{i1}<2\epsilon$ and \[\frac{\mu_{0}(X_{i})+\epsilon_{i0}}{\mu_{1}(X_{i})+\epsilon_{i1}}=\frac{b}{b+k} \] (since $\phi(g_{i})\in C[0,1]_{(b,k)}$).

Hence $$|\mu_{0}(X_{i})-\frac{b}{b+k}\mu_{1}(X_{i})|<4\epsilon,$$ since $\epsilon$ is arbitrary, one gets that $$\mu_{0}(X_{i})=\dfrac{b}{b+k}\mu_{1}(X_{i}), i=2,...,n-1.$$ 

Next we prove the second relation. We use similar ideas, i.e., we choose a function which is almost supported around 0 and 1, apply $\phi$, and then compare the evaluations.

For $i=1$ and $i=n$, we know that $0=inf\{x:x\in X_{1}\}$, $1=sup\{x: x\in X_{n}\}$, and suppose $a_{1}=\sup\{x: x\in X_{1}\}$, $b_{n}=\inf\{x: x\in X_{n}\}$.
Suppose $a_{1}\in X_{1},b_{n}\in X_{n}$, that is $X_{1}=[0,a_{1}]$, $X_{n}=[b_{n},1]$. For $\forall\epsilon>0$, by Lemma \ref{small}, there exists a $\delta>0$ such that 
$$\begin{aligned}
&\mu_{0}((b_{n}-\delta,b_{n}))<\epsilon,\mu_{1}((b_{n}-\delta,b_{n}))<\epsilon,\\
&\mu_{0}((a_{1},a_{1}+\delta))<\epsilon,\mu_{1}((a_{1},a_{1}+\delta))<\epsilon.\\
\end{aligned}$$
We choose a function $g(x)$ as follows: \[ g(x)=\begin{cases}
                 \frac{a}{k+a} & x\in[0,a_{1}] \\
                   -\frac{a(x-a_{1})}{\delta(a+k)}+\frac{a}{a+k} & x\in(a_{1},a_{1}+\delta) \\
                   0 & x\in([a_{1}+\delta,b_{n}-\delta] \\
                   \frac{x-b_{n}}{\delta}+1 & x\in(b_{n}-\delta,b_{n}) \\
                     1 & x\in[b_{n},1] \\
                 \end{cases}
               \]
Then \[\phi(g)(0)=\int_{[0,1]}gd\mu_{0}=\mu_{0}(X_{n})+\frac{a}{a+k}\mu_{0}(X_{1})+\int_{(a_{1},a_{1}+\delta)}gd\mu_{0}+\int_{(b_{n}-\delta,b_{n})}gd\mu_{0},\] and
\[\phi(g)(1)=\int_{[0,1]}gd\mu_{1}=\mu_{1}(X_{n})+\frac{a}{a+k}\mu_{1}(X_{1})+\int_{(a_{1},a_{1}+\delta)}gd\mu_{1}+\int_{(b_{n}-\delta,b_{n})}gd\mu_{1}.\]
Let us take $$\epsilon_{0}=\int_{(a_{1},a_{1}+\delta)}gd\mu_{0}+\int_{(b_{n}-\delta,b_{n})}gd\mu_{0},$$ and
$$\epsilon_{1}=\int_{(a_{1},a_{1}+\delta)}gd\mu_{1}+\int_{(b_{n}-\delta,b_{n})}gd\mu_{1},$$ then $$\epsilon_{0}<\frac{2a+k}{a+k}\epsilon, \epsilon_{1}<\frac{2a+k}{a+k}\epsilon.$$ Moreover, one has that
\[\frac{\mu_{0}(X_{n})+\dfrac{a}{a+k}\mu_{0}(X_{1})+\epsilon_{0}}{\mu_{1}(X_{n})+\dfrac{a}{a+k}\mu_{1}(X_{1})+\epsilon_{1}}=\frac{b}{b+k} \] (since $\phi(g)\in C[0,1]_{(b,k)}$).

Then $$|\frac{a}{a+k}\mu_{0}(X_{1})+\mu_{0}(X_{n})-\frac{b}{b+k}(\frac{a}{a+k}\mu_{1}(X_{1})+\mu_{1}(X_{n}))|<r\epsilon$$ for some $r$, which implies that $$\frac{a}{a+k}\mu_{0}(X_{1})+\mu_{0}(X_{n})=\frac{b}{b+k}(\frac{a}{a+k}\mu_{1}(X_{1})+\mu_{1}(X_{n}))$$ since $\epsilon$ is arbitrary.
Similar proofs go through for other cases.
\end{proof}

\begin{cor}\label{lem:mea}
Given any unital positive linear map $\phi$ from $C[0,1]$ to $C[0,1]$ which sends $C[0,1]_{(a,k)}$ to $C[0,1]_{(b,k)}$, denote by $\mu_0$ and $\mu_1$ the measures induced by evaluations of $\phi$ at 0 and 1. Let there be given a partition $\{X_{1},X_{2},...,X_{n}\}$ of $[0,1]$, where $X_{i}$ is a connected Borel set and $0\in X_{1}$, $1\in X_{n}$,
then we have the following distribution of $\mu_0$ and $\mu_1$ with respect to the partition:
 \begin{align}
&\mu_{0}(X_{1})=\frac{b}{b+k}\mu_{1}(X_{1})+\frac{a+k}{b+k},\\
&\mu_{0}(X_{n})=\frac{b}{b+k}\mu_{1}(X_{n})-\frac{a}{b+k}.
\end{align}
\end{cor}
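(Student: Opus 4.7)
The corollary is essentially an algebraic consequence of Lemma \ref{mea2} combined with the fact that $\mu_0$ and $\mu_1$ are probability measures. The plan is to turn the two displayed relations of Lemma \ref{mea2} into a $2\times 2$ linear system in the unknowns $\mu_0(X_1)$ and $\mu_0(X_n)$, then solve.

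First I would exploit the probability normalization. Summing the interior relation $\mu_0(X_i)=\tfrac{b}{b+k}\mu_1(X_i)$ over $i=2,\dots,n-1$ gives
\[
\sum_{i=2}^{n-1}\mu_0(X_i)=\frac{b}{b+k}\sum_{i=2}^{n-1}\mu_1(X_i)=\frac{b}{b+k}\bigl(1-\mu_1(X_1)-\mu_1(X_n)\bigr).
\]
Since $\sum_{i=1}^n\mu_0(X_i)=1$, subtracting yields the auxiliary identity
\[
\mu_0(X_1)+\mu_0(X_n)=\frac{k}{b+k}+\frac{b}{b+k}\mu_1(X_1)+\frac{b}{b+k}\mu_1(X_n). \tag{$\ast$}
\]

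Next I would pair $(\ast)$ with the second relation from Lemma \ref{mea2},
\[
\frac{a}{a+k}\mu_0(X_1)+\mu_0(X_n)=\frac{b}{b+k}\Bigl[\frac{a}{a+k}\mu_1(X_1)+\mu_1(X_n)\Bigr]. \tag{$\ast\ast$}
\]
Subtracting $(\ast\ast)$ from $(\ast)$ eliminates $\mu_0(X_n)$ and the $\mu_1(X_n)$ terms, leaving
\[
\Bigl(1-\tfrac{a}{a+k}\Bigr)\mu_0(X_1)=\frac{k}{b+k}+\frac{b}{b+k}\Bigl(1-\tfrac{a}{a+k}\Bigr)\mu_1(X_1),
\]
i.e.\ $\tfrac{k}{a+k}\mu_0(X_1)=\tfrac{k}{b+k}+\tfrac{b}{b+k}\cdot\tfrac{k}{a+k}\mu_1(X_1)$. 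Multiplying by $\tfrac{a+k}{k}$ produces exactly
\[
\mu_0(X_1)=\frac{a+k}{b+k}+\frac{b}{b+k}\mu_1(X_1),
\]
which is the first formula. Substituting this back into $(\ast)$ and simplifying gives the second:
\[
\mu_0(X_n)=\frac{b}{b+k}\mu_1(X_n)-\frac{a}{b+k}.
\]

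There is no real obstacle here beyond bookkeeping; the only thing to be careful about is the use of the probability-measure normalization (which requires all the $X_i$ to form a genuine partition of $[0,1]$, as assumed) and the verification that the algebra is consistent in the sense that the right-hand side for $\mu_0(X_n)$ is automatically nonnegative — this is guaranteed because it equals $\mu_0(X_n)$, an actual measure of a set, though one may note in passing that it forces the constraint $\mu_1(X_n)\geq a/b$ implicit in the hypothesis $b>a$.
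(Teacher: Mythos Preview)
Your proof is correct and follows essentially the same approach as the paper: both combine the interior relations $\mu_0(X_i)=\tfrac{b}{b+k}\mu_1(X_i)$ with the probability normalization $\sum_i\mu_0(X_i)=\sum_i\mu_1(X_i)=1$ and the boundary relation (3.2), then solve the resulting linear system. Your presentation is in fact slightly cleaner, isolating the auxiliary identity $(\ast)$ before subtracting, whereas the paper adds (3.1) and (3.2) together first and then adjusts by adding $\mu_0(X_1)+\tfrac{b}{b+k}\mu_1(X_1)$ to both sides; but the content is identical.
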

\begin{proof} By adding (3.1) from $i=2,...n-1$ and (3.2), we have
$$\begin{aligned}
&\sum_{i=2}^{n-1}\mu_{0}(X_{i})+\frac{a}{a+k}\mu_{0}(X_{1})+\mu_{0}(X_{n})\\
=&\sum_{i=2}^{n-1}\frac{b}{b+k}\mu_{1}(X_{i})+\frac{b}{b+k}[\frac{a}{a+k}\mu_{1}(X_{1})+\mu_{1}(X_{n})]
\end{aligned}$$
Then we add $\mu_{0}(X_{1})+\dfrac{b}{b+k}\mu_{1}(X_{1})$ from both sides of the equation above, since $\mu_{0}([0,1])=1$, $\mu_{1}([0,1])=1$, we get
$$1+\frac{a}{a+k}\mu_{0}(X_{1})+\frac{b}{b+k}\mu_{1}(X_{1})=\frac{b}{b+k}+\mu_{0}(X_{1})+\frac{b}{b+k}\frac{a}{a+k}\mu_{1}(X_{1}).$$ Then one can solve $\mu_0(X_1)$ from above to get the equation (3.3).
By (3.2) and (3.3), we could get the equation (3.4).
\end{proof}

\begin{remark}\label{b<a} By (3.3), we have $$\mu_{0}(X_{1})=\frac{b}{b+k}\mu_{1}(X_{1})+\frac{a+k}{b+k}\geq\frac{a+k}{b+k},$$ then $1\geq\mu_{0}(X_{1})\geq\dfrac{a+k}{b+k}$, thus we get $b\geq a$. In other words, there is no unital positive linear maps on $C[0,1]$ which sends $C[0,1]_{(a,k)}$ to $C[0,1]_{(b,k)}$ if $b<a$.\\
\end{remark}

\begin{lem}\label{app}
For any $\eta>0$, and for all the given $\mu_{0}(X_i)$, $\mu_{1}(X_i)(i=1,...,n)$ as above in Lemma \ref{mea2}, there exist rational numbers $0\leq r_1,...,r_n\leq1$ and $0\leq s_1,...,s_n\leq1$ which add up to 1 respectively, such that
$$0\leq r_i-\mu_{0}(X_i)\leq\eta,\, 0\leq s_i-\mu_{1}(X_i)\leq\eta,\, i=1, 2,...,n;$$ moreover, the relations among $\mu_{0}(X_i)$ and $\mu_{1}(X_i)$ hold for these $r_i$ and $s_i$, i.e.,
\begin{align}
&r_i=\frac{b}{b+k}s_i, i=2,...,n-1, \\
&\frac{a}{a+k}r_1+r_n=\frac{b}{b+k}(\frac{a}{a+k}s_1+s_n).
\end{align}
\end{lem}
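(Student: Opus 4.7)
The plan is to perturb $\mu_0(X_i)$ and $\mu_1(X_i)$ to rational numbers while preserving the linear relations (3.5), (3.6) and the two normalizations $\sum_i r_i = \sum_i s_i = 1$. Since $a$, $b$, $k$ are positive integers, all coefficients appearing in the constraints---namely $\tfrac{b}{b+k}$, $\tfrac{a}{a+k}$, $\tfrac{a+k}{b+k}$, $\tfrac{a}{b+k}$---are rational, so the solution set is a rational affine subspace of $\mathbb{R}^{2n}$ in which $\mathbb{Q}$-points are dense. This is the mechanism that makes a simultaneous rational approximation compatible with all the algebraic relations possible.

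\textbf{Step 1: Parameterize.} Rederiving the computation of Corollary \ref{lem:mea}, the full constraint system is equivalent to the explicit formulas
\begin{align*}
r_i &= \tfrac{b}{b+k}\, s_i \quad (i=2,\ldots,n-1),\\
r_1 &= \tfrac{b}{b+k}\, s_1 + \tfrac{a+k}{b+k}, \qquad r_n = \tfrac{b}{b+k}\, s_n - \tfrac{a}{b+k},\\
s_n &= 1 - \textstyle\sum_{i=1}^{n-1} s_i,
\end{align*}
with $s_1, \ldots, s_{n-1}$ free; a direct substitution verifies that every such tuple satisfies $\sum_i r_i = 1$ and (3.6). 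In particular, if $s_1, \ldots, s_{n-1} \in \mathbb{Q}$ then $s_n$ and every $r_i$ are automatically rational.

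\textbf{Step 2: Approximate and check bounds.} I would choose rational $s_1, \ldots, s_{n-1}$ with $|s_i - \mu_1(X_i)| < \eta / M$ for a constant $M = M(n,a,b,k)$ determined in the verification. Because Corollary \ref{lem:mea} says the measures themselves obey exactly the same linear formulas, the induced errors for $s_n$ and every $r_i$ are linear combinations of the $|s_i - \mu_1(X_i)|$ with coefficients bounded in terms of $n, a, b, k$; a sufficiently large $M$ then gives all estimates within $\eta$. The only $[0,1]$-membership conditions that are not automatic are $r_1 \le 1$ (equivalently $s_1 \le (b-a)/b$) and $r_n \ge 0$ (equivalently $s_n \ge a/b$); both hold in the limit because $\mu_0(X_1) \le 1$ and $\mu_0(X_n) \ge 0$, and if one of these is saturated at the boundary one simply takes the corresponding rational to be exactly $\mu_1(X_1)$ or $\mu_1(X_n)$, which by (3.3)--(3.4) is itself then rational.

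The main obstacle is the one-sided inequality ``$0 \le r_i - \mu_0(X_i) \le \eta$'' as literally stated: since $\sum_i r_i = \sum_i \mu_0(X_i) = 1$, any strictly one-sided approximation forces $r_i = \mu_0(X_i)$ for every $i$, which is incompatible with rationality in general. I therefore interpret the intended bound as the two-sided estimate $|r_i - \mu_0(X_i)| \le \eta$ (and likewise for $s_i$), under which the construction above produces the required tuple.
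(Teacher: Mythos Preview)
Your approach is essentially the same as the paper's: choose rational approximations $s_i$ to $\mu_1(X_i)$ for all but one index, determine the remaining $s$-coordinate by the normalization $\sum s_i=1$, and then define every $r_i$ from the $s_i$ via the explicit linear relations of Corollary~\ref{lem:mea}. The only cosmetic difference is that the paper takes $s_2,\ldots,s_n$ as the free parameters (and sets $s_1=1-\sum_{i\ge 2}s_i$), whereas you take $s_1,\ldots,s_{n-1}$ free.

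Your write-up is in fact more careful than the paper's in two respects. First, you check the nontrivial range conditions $r_1\le 1$ and $r_n\ge 0$, which the paper does not address. Second, your observation about the one-sided bound is correct and worth noting: since $\sum_i r_i=\sum_i \mu_0(X_i)=1$, the literal requirement $0\le r_i-\mu_0(X_i)$ for all $i$ forces $r_i=\mu_0(X_i)$ identically, which need not be rational. The paper's own construction exhibits exactly this defect---with $s_i\ge \mu_1(X_i)$ for $i\ge 2$ one gets $s_1=1-\sum_{i\ge 2}s_i\le \mu_1(X_1)$, so the displayed inequality for $i=1$ goes the wrong way---but the paper does not comment on it. The downstream use of the lemma in the proof of Theorem~\ref{thm2} only needs the two-sided estimate $|r_i-\mu_0(X_i)|\le\eta$, $|s_i-\mu_1(X_i)|\le\eta$, so your interpretation is the right one.
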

\begin{proof} We take rational approximations $s_i$ for $\mu_1(X_i)$ with $0\leq s_i-\mu_{1}(X_i)\leq\frac{\eta}{n}$, and then take $r_i=\dfrac{b}{b+k}s_i$, then they fit the desired relation because of the relation between $\mu_{0}(X_i)$ and $\mu_1(X_i) (2\leq i\leq n-1)$.
Fix a rational approximation $s_n$ for $\mu_1(X_n)$ with $0\leq s_n-\mu_{1}(X_n)\leq\frac{\eta}{n}$, 
then
$$ \begin{aligned}
 &1-(s_2+...+s_{n-1}-s_n)-\mu_1(X_1)\\
 =&1-(s_2+...+s_{n-1}-s_n)-(1-(\mu_1(X_2)+...+\mu_1(X_{n-1}))-\mu_1(X_n))\\
 =& (s_2-\mu_1(X_2))+...+(s_{n-1}-\mu_1(X_{n-1}))+(s_n-\mu_1(X_n))\leq\eta.
 \end{aligned} $$ We take $$s_1=1-(s_2+...+s_{n-1})-s_n, r_n=\frac{b}{b+k}s_n-\frac{a}{b+k},$$ and $$r_1=1-(r_2+...+r_{n-1})-r_n,$$ then all of the data fit in the requirements.
\end{proof}

\begin{cor}\label{boundary}
For any positive integer $N$, for any collection of $N$ points $\{x_i \in (0,1)|1\leq i\leq N\}$, and for integers $k_1, k_n$ and $m_1, m_n$ satisfying the relation (3.6), Then we have that $$k_1f(0)+\sum\limits_{i=1}^{N}bl_if(x_i)+k_nf(1)=\frac{b}{b+k}(m_1f(0)+\sum\limits_{i=1}^{N}(b+k)l_if(x_i)+m_nf(1))$$ for all $f\in C[0,1]_{(a,k)}$, where these $l_i$ are arbitrarily chosen positive integers.
\end{cor}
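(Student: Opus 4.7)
The plan is to show that the identity reduces, after cancellation of the interior terms, to the single scalar relation \textup{(3.6)} combined with the defining condition $f(0)=\frac{a}{a+k}f(1)$ of $C[0,1]_{(a,k)}$.

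First, I would expand the right-hand side: the coefficient of each $f(x_i)$ on the right is $\frac{b}{b+k}\cdot(b+k)l_i = bl_i$, which is exactly the coefficient of $f(x_i)$ on the left. Hence the sums $\sum_{i=1}^{N}bl_if(x_i)$ cancel from both sides, and the claimed identity is equivalent to
\begin{equation*}
k_1 f(0) + k_n f(1) \;=\; \frac{b}{b+k}\bigl(m_1 f(0)+m_n f(1)\bigr),\qquad f\in C[0,1]_{(a,k)}.
\end{equation*}

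Next I would substitute $f(0)=\frac{a}{a+k}f(1)$, which holds because $f\in C[0,1]_{(a,k)}$. The identity to verify then becomes
\begin{equation*}
\left(\frac{a}{a+k}k_1+k_n\right)f(1) \;=\; \frac{b}{b+k}\left(\frac{a}{a+k}m_1+m_n\right)f(1),
\end{equation*}
and since $f(1)$ is an arbitrary real number this is equivalent to the scalar equation
\begin{equation*}
\frac{a}{a+k}k_1+k_n \;=\; \frac{b}{b+k}\left(\frac{a}{a+k}m_1+m_n\right),
\end{equation*}
which is precisely the hypothesis \textup{(3.6)} with $(r_1,r_n,s_1,s_n)=(k_1,k_n,m_1,m_n)$.

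There is essentially no analytic obstacle here; the only thing to watch is the book-keeping of the coefficients on the interior points so that one correctly observes the cancellation of the $f(x_i)$ terms. The fact that the $l_i$ are arbitrary positive integers is then immediate, since their contributions drop out of the identity before \textup{(3.6)} is invoked.
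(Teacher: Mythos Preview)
Your proof is correct and follows essentially the same argument as the paper: substitute $f(0)=\frac{a}{a+k}f(1)$ and observe that the interior $f(x_i)$ terms match, reducing the claim to the scalar identity (3.6). The only cosmetic difference is that you cancel the interior terms before substituting, whereas the paper substitutes first and then notes the coincidence with the right-hand side.
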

\begin{proof} Since $f\in C[0,1]_{(a,k)}$, one has that $f(0)=\dfrac{a}{a+k}f(1)$. Then the left hand side of above equals to $(\dfrac{a}{a+k}k_1+k_n)f(1)+\sum\limits_{i=1}^{N}bl_if(x_i)$. Hence it coincides with the right hand side by the relation (3.6).
\end{proof}


\textbf{Now we proceed to prove Theorem \ref{thm2}:}
\begin{proof}
$\mathit{Step I}$, The first step is exactly the same as the first step of Theorem \ref{thm1}. To avoid redundancy, we skip it and still use the same notations there.

$\mathit{Step II}$, In a similar way, we approximate the finite sum of evaluations by a linear map $w$ on $C[0,1]$ defined as an integral of the composition with some continuous function $h(y,t)$ from $[0,1]\times[0,1]$ to $[0,1]$.


Let us define
$$\begin{aligned}
& l_{1}(y)=\lambda_{2}(y)\\
&l_{2}(y)=\max\{0,\min\{\lambda_{1}(y)-\lambda_{1}(1),\lambda_{2}(1)-\lambda_{2}(y)\}\}\\
&\cdots\\
&l_{2n-3}(y)=\lambda_{n}(y)\\
&l_{2n-2}(y)=\max\{0,\min\{\lambda_{1}(y)-\lambda_{1}(1)-l_{2}(y)-...-l_{2n-4}(y),\lambda_{n}(1)-\lambda_{n}(y)\}\}\\
&l_{2n-1}(y)=1-\sum_{j=1}^{2n-2}l_{j}(y)\\
&l_{2n}(y)=0.\\
\end{aligned}$$
By (3.1-3.4), we have
$$\lambda_{i}(0)=\mu_{0}(X_{i})\leq \mu_{1}(X_{i})= \lambda_{i}(1)(i=2,...n),$$ and
$$\lambda_{1}(0)=\mu_{0}(X_{1})\geq\mu_{1}(X_{1})= \lambda_{1}(1).$$

Thus$$
\begin{aligned}
&l_{1}(0)=\mu_{0}(X_{2}),\qquad \qquad\qquad \;\;\;\; \; \; \; \; \; \; \;\qquad  l_{1}(1)=\mu_{1}(X_{2}),\\
&l_{2}(0)=\mu_{1}(X_{2})-\mu_{0}(X_{2}),\qquad\qquad \;\;\;\; \;\;\;\;\;\;l_{2}(1)=0,\\
&\cdots\\
&l_{2n-3}(0)=\mu_{0}(X_{n}),\qquad\qquad \;\;\qquad\qquad \;\;\;l_{2n-3}(1)=\mu_{1}(X_{n}),\\
&l_{2n-2}(0)=\mu_{1}(X_{n})-\mu_{0}(X_{n}),\qquad\qquad \;\;\;l_{2n-2}(1)=0,\\
&l_{2n-1}(0)=\mu_{1}(X_{1}),\qquad\qquad \qquad\qquad \;\;\;\;\;l_{2n-1}(1)=\mu_{1}(X_{1}),\\
&l_{2n}(0)=0.\qquad\qquad\qquad\qquad \;\;\;\qquad\qquad \;l_{2n}(1)=0.\\
 \end{aligned}$$
Next define $G_{0},G_{1},...G_{2n}:[0,1]\rightarrow[0,1]$ by
\[ G_{0}(y)=0,G_{j}(y)=\sum_{i=1}^{j}\ l_{i}(y),\;j=1,2,...,2n.\]
Then for $j=2k$ we have the consistency that $$G_{j}(0)=G_{j}(1)=\sum_{i=1}^{k}\ \mu_{1}(X_{i}).$$

Let  $\delta>0$  be a rational number, for each $y\in [0,1]$, these points $\{G_i(y)\}_i$ give to a partition of $[0,1]$. 

Moreover, for each $j$, we define
$$f_{j}(y)=\min\{G_{j-1}(y)+\delta;\frac{G_{j-1}(y)+G_{j}(y)}{2}\},$$ and
$$g_{j}(y)=\max\{G_{j}(y)-\delta;\frac{G_{j-1}(y)+G_{j}(y)}{2}\}.$$

To define $h(y,t)$, we only need to define $h(y,t)$ on each $[0,1]\times[G_{j-1},G_{j}(y)]$. Let us denote by $h_{j}(y,t)$ this restriction.
For our purpose, we choose the following $h_j(y,t)$:
\[h_{j}(y,t)=\begin{cases}
                  \frac{x_{j}(t-G_{j-1}(y))}{\delta} & t\in [G_{j-1}(y),f_{j}(y)] \\
                  \min(x_{j},\frac{x_{j}(G_{j}(y)-G_{j-1}(y))}{2\delta}) & t\in [f_{j}(y),g_{j}(y)] \\
                  \frac{x_{j}(G_{j}(y)-t)}{\delta} & t\in [g_{j}(y),G_{j}(y)]. \\
                 \end{cases}
               \]
where $x_{j}=x_{i}$ if $j=2i-1$, $x_{j}=0$ if $j=2i$.

Then $h_{j}(y,t)$ satisfies that for any $y\in [0,1]$,
$$|h_{j}(y,t_{1})-h_{j}(y,t_{2})|\leq\frac{|t_{1}-t_{2}|}{\delta},$$
and $h_{j}(y,t):[0,1]\times[G_{j-1},G_{j}(y)]\rightarrow[0,1]$ is continuous.
Hence $h(y,t):[0,1]\times[0,1]\rightarrow[0,1]$ is continuous and
$$|h(y,t_{1})-h(y,t_{2})|\leq\frac{|t_{1}-t_{2}|}{\delta}.$$

Define $w:C[0,1]\rightarrow C[0,1]$ by $$w(f)(y)=\int_{0}^{1}f(h(y,t))dt$$ where $f\in C[0,1], y\in[0,1]$.

Then for all $f\in F$, one has that
$$\begin{aligned}
&|\sum_{i=1}^n\lambda_{i}(y)f(x_{i})-w(f)(y)|\\
=& |\sum_{j=1}^{2n}l_{j}(y)f(x_{j})-w(f)(y)|\\
=&|\sum_{j=1}^{2n}\int_{G_{j-1}(y)}^{G_{j}(y)}f(x_{j})dt-\int_{0}^{1}f(h(y,t))dt|\\
\leq&\sum_{j=1}^{2n}|\int_{G_{j-1}(y)}^{G_{j}(y)}f(x_{i})-f(h(y,t))dt|\\
=& \sum_{j=1}^{2n}|(\int_{G_{j-1}(y)}^{f_{j}(y)}+\int_{f_{j}(y)}^{g_{j}(y)}+\int_{g_{j}(y)}^{G_{j}(y)}f(x_{j}-f(h(y,t))dt|\\
\leq& \sum_{j=1}^{2n}(2\delta\sup_{f\in F}\|f\|+0+2\delta\sup_{f\in F}\|f\|)\\
=& 8n\delta\sup_{f\in F}\|f\|<\frac{\epsilon}{4}.
\end{aligned}$$
The last inequality holds because of the choice of $\delta$ later.

$\mathit{Step III}$,  We shall choose $N_{1}$ continuous maps on $[0,1]$ to define the homomorphisms. Such maps come from $h(y,t)$ by specifying $N_{1}$ values of $t$.

let $\tau$ be the number of $X_i$ such that $\mu_{0}(X_{i})\neq0$ for $i=2,...,n-1.$ Then $\mu_{1}(X_{i})\neq0$ for the same index by (3.1). We denote these $X_{i}$ by $X_{i_{1}},,...,X_{i_{\tau}}$.

Then  for $X_{1},X_{i_{1}},...,X_{i_{\tau}},X_{n}$, by Lemma \ref{app}, for $\frac{\delta}{n}$,  there exist rational numbers $0\leq r_1,r_{i_{1}}...,r_{i_{\tau}},r_n\leq1$ and $0\leq s_1,s_{i_{1}},...,s_{i_{\tau}},s_n\leq1$ such that
$$\begin{aligned}
&r_1+\sum_{i=1}^{\tau}r_{i_{\iota}}+r_n=1;\, 0\leq r_i-\mu_{0}(X_i)\leq\frac{\delta}{n},i=1,i_{1},...,i_{\tau},n\\
&s_1+\sum_{i=1}^{\tau}s_{i_{\iota}}+s_n=1;\, 0\leq s_i-\mu_{1}(X_i)\leq\frac{\delta}{n},i=1,i_{1},...,i_{\tau},n\\
&\frac{ r_{i_{\iota}}}{ s_{i_{\iota}}}=\frac{b}{b+k}, \iota=1,...\tau\\
&\frac{a}{a+k}r_1+r_n=\frac{b}{b+k}(\frac{a}{a+k}s_1+s_n).\\
\end{aligned}$$
Choose an integer $N_{1}>0$, such that $\frac{1}{N_{1}}<\delta\delta_{0}$ and $N_{1}\delta$, $N_{1}s_{i}$, $N_{1}r_{i}(i=1,i_{1},...i_{\tau},n)$ are integers, let $t_{j}=\frac{j}{N_{1}}\in[0,1]$, $j=1,2,...,N_{1}$.

To save notations, rewrite  $N_{1}s_{i}$, $N_{1}r_{i}$ by $s_{i}$, $r_{i}(i=1,i_{1},...i_{\tau},n)$, then 
 \[\sum_{\iota=1}^{\tau}s_{i_{\iota}}+s_{n}+s_{1}=\sum_{\iota=1}^{\tau}r_{i_{\iota}}+r_{n}+r_{1}=N_{1},\] and
\begin{align}
&0\leq r_i-\mu_{0}(X_i)N_{1}\leq N_{1}\frac{\delta}{n},(i=1,i_{1},...i_{\tau},n)\\
&0\leq s_i-\mu_{1}(X_i)N_{1}\leq N_{1}\frac{\delta}{n}, (i=1,i_{1},...i_{\tau},n)\\
&\frac{ r_{i_{\iota}}}{ s_{i_{\iota}}}=\frac{b}{b+k},\;\iota=1,...\tau\\
&\frac{a}{a+k} r_{1}+r_{n}=\frac{b}{b+k}(\frac{a}{a+k} s_{1}+s_{n}).
\end{align}

Define $\phi_{j}:C[0,1]\rightarrow C[0,1]$ by $\phi_{j}(f)(y)=f(h(y,t_{j}))$, Then
\[w(f)(y)=\int_{0}^{1}f(h(y,t))dt=\sum_{j=1}^{N_{1}}\int_{t_{j-1}}^{t_{j}}f(h(y,t))dt\]
and $$\begin{aligned}
&|w(f)(y)-\frac{1}{N_{1}}\sum_{j=1}^{N_{1}}\phi_{j}(f)(y)|\\ 
=&|\sum_{j=1}^{N_{1}}\int_{t_{j-1}}^{t_{j}}f(h(y,t))-f(h(y,t_{j}))dt|\\
<&\sum_{j=1}^{N_{1}}\int_{t_{j-1}}^{t_{j}}\frac{\epsilon}{4}=\frac{\epsilon}{4}.\\
\end{aligned}$$
for all $f\in F$, where $|f(h(y,t))-f(h(y,t_{j}))|<\frac{\epsilon}{4}$, because $|h(y,t)-h(y,t_{j})|<\delta_{0}$ (Note that $|t-t_{j}|\leq\dfrac{1}{N_{1}}<\delta\delta_{0}$).

$\mathit{Step IV}$, We need to choose some $t_j=\dfrac{j}{N_1}$ among all of them to obtain new $N$ points, such that the corresponding average $\dfrac{1}{N}\sum\limits_i\phi_{i}$ sends $C[0,1]_{(a,k)}$ to $C[0,1]_{(b,k)}$, and also this new average approximates the average of the original $N_1$ homomorphisms. But in the current situation, rather than that for Theorem \ref{thm1}, it is much more complicated.

Define new integers

$\qquad S_{i_{0}}=0,\;\;\;S_{i_{\gamma}}=\sum_{\iota=1}^{\gamma}s_{i_{\iota}}(\gamma=1,...,\tau),\;S_{n}=S_{i_{\tau}}+s_{n}.$

$ \qquad R_{i_{1}}=r_{i_{1}},\;R_{i_{\gamma}}=S_{i_{\gamma-1}}+r_{i_{\gamma}}(\gamma=1,...,\tau),\;R_{n}=S_{i_{\tau}}+r_{n}.$\\
Then
$$\begin{aligned}
&0\leq |S_{i_{\gamma}}-\sum_{\iota=1}^{\gamma}\mu_{1}(X_{i_{\iota}})N_{1}|\leq N_{1}\delta,\gamma=1,...\tau,\\
&0\leq |S_{n}-[\sum_{\iota=1}^{\tau}\mu_{1}(X_{i_{\iota}})+\mu_{1}(X_{n})]N_{1}|\leq N_{1}\delta,\\
&0\leq |R_{i_{\gamma}}-[\sum_{\iota=1}^{\gamma-1}\mu_{1}(X_{i_{\iota}})+\mu_{0}(X_{i_{\gamma}})]N_{1}|\leq N_{1}\delta,\gamma=1,...\tau,\\
&0\leq| R_{n}-[\sum_{\iota=1}^{\tau}\mu_{1}(X_{i_{\iota}})+\mu_{0}(X_{n})]N_{1}|\leq N_{1}\delta.\\
\end{aligned}$$
To ensure $\dfrac{1}{N}\sum\limits_i\phi_{i}$ sends $C[0,1]_{(a,k)}$ to $C[0,1]_{(b,k)} $, we need to exclude some functions $h(y,t_{j})$, for which $h(0,t_{j})\neq x_{i_{\iota}}$ when $j\in(S_{i_{\iota-1}},R_{i_{\iota}}]$ and $h(1,t_{j})\neq x_{i_{\iota}}$ when $j\in(S_{i_{\iota-1}},S_{i_{\iota}}](\iota=1,...,\tau)$; $h(0,t_{j})\neq 1$ when $j\in(S_{i_{\tau}},R_{n}]$ and $h(1,t_{j})\neq1$ when $j\in(S_{i_{\tau}},S_{n}]$.

Assume that we throw out $m_{i_{\iota}}$ functions $h(0,t_j)$ for $j\in(S_{i_{\iota-1}},R_{i_{\iota}}](\iota=1,...,\tau)$, $m_{n}$ functions $h(0,t_j)$ for $j\in(S_{i_{\tau}},R_{n}]$, and $m_{1}$ functions $h(0,t_j)$ for the remaining $j$; $z_{i_{\iota}}$ functions $h(1,t_j)$ for $j\in(S_{i_{\iota-1}},S_{i_{\iota}}](\iota=1,...,\tau)$, $z_{n}$ functions $h(1,t_j)$ for $j\in(S_{i_{\tau}},S_{n}]$, and $z_{1}$ functions $h(1,t_j)$ for $j\in(S_{n},N_{1}]$.

By Corollary \ref{boundary}, to fit in our purpose, we need to throw out functions in proportion, such that the remaining ones could satisfy the relation (3.9-10). So we need to require
 \begin{align}
&\frac{m_{i_{\iota}}}{z_{i_{\iota}}}=\frac{b}{b+k}, \iota=1,...,\tau\\
&\frac{a}{a+k}m_{1}+m_{n}=\frac{b}{b+k}(\frac{a}{a+k}z_{1}+z_{n}).
\end{align}

Next we proceed in two cases.

Case I: $\mu_{0}(X_{n})=0$.

By (3.7), we get $ r_{n}\leq N_{1}\frac{\delta}{n}$. then take $m_{n}=0$. So we suppose
$$\begin{aligned}
&m_{i_{\iota}}=2\delta N_{1}b,\qquad\qquad \qquad\qquad z_{i_{\iota}}=2\delta N_{1}(b+k),\iota=1,...,\tau,\\
&m_{n}=0,\qquad\qquad \qquad\qquad \qquad\; z_{n}=t,\\
&m_{1}=2\delta N_{1}k\tau+t,\qquad  \qquad\;\;\;\;\;\;\;z_{1}=0.\\
\end{aligned}$$
By (3.12), since
\[\frac{a}{a+k}(2\delta N_{1}k\tau+t)=\frac{b}{b+k}t,\]
one has $t=\dfrac{2\delta N_{1}a\tau(b+k)}{b-a}.$
Then we take
$$\begin{aligned}
&m_{i_{\iota}}=2\delta N_{1}(b-a)b,\qquad\qquad  z_{i_{\iota}}=2\delta N_{1}(b-a)(b+k),\iota=1,...,\tau,\\
&m_{n}=0,\qquad\qquad \qquad\qquad \;\;\;\; z_{n}=2\delta N_{1}a\tau(b+k),\\
&m_{1}=2\delta N_{1}(a+k)\tau b,\qquad\;\;\;\;\;   z_{1}=0.\\
\end{aligned}$$
Let $\delta$ satisfy
$$\begin{aligned}
&4n\delta(b+k)b\sup_{f\in F}\|f\|\leq\frac{\epsilon}{4},\\
&3\delta b(b-a)\leq\mu_{0}(X_{i_{\iota}}),\iota=1,2,...\tau,\\
&3\delta (a+k)b\tau\leq\mu_{0}(X_{1}),\\
&3\delta(b+k)a\tau\leq\mu_{1}(X_{n}).\\
\end{aligned}$$
By (3.7-3.8), we have
$$\begin{aligned}
&r_{i_{\iota}}> 2\delta N_{1}(b-a)b,\iota=1,2,...,\tau,\\
&r_{1}> 2\delta N_{1}(a+k)b\tau,\\
&s_{i_{\iota}}>2\delta N_{1}(b-a)(b+k),\iota=1,2,...,\tau,\\
&s_{n}> 2\delta N_{1}(b+k)a\tau.\\
\end{aligned}$$
Consider the set $\Lambda_1$ of integers $j$ which belongs to one of the following intervals:
$$\begin{aligned}
&S_{i_{\gamma-1}}+\delta N_{1}(b-a)b+1\leq j\leq R_{i_{\gamma}}-\delta N_{1}(b-a)b,\gamma=1,...,\tau,\\
&R_{i_{\gamma}}+1\leq j\leq S_{i_{\gamma}}-\delta N_{1}(b-a)k,\gamma=1,...,\tau,\\
&S_{i_{\tau}}+\delta N_{1}(b+k)a\tau+1\leq j\leq S_{n}-\delta N_{1}(b+k)a\tau,\\
&S_{n}+1\leq j\leq N_{1}.\\
\end{aligned}$$
Set $D_1=\{\frac{j}{N_1}\mid j\in\Lambda_1\}$ and $N=|D_1|$, then by the construction of $h(y,t)$, point evaluations of $h$ at the points in $D_1$ has the following conclusion:
$$\begin{aligned}
&h(0,t_{j})\!=\!h(1,t_{j})=\!x_{i_{\gamma}}, \,\text{if}\, S_{i_{\gamma-1}}\!+\!\delta N_{1}(b-a)b\!+\!1\leq \!j\!\leq R_{i_{\gamma}}\!-\!\delta N_{1}(b-a)b\!,\gamma=1,...,\tau\\
&h(0,t_{j})=0,h(1,t_{j})=x_{i_{\gamma}}, \,\text{if}\, R_{i_{\gamma}}+1\leq j\leq S_{i_{\gamma}}-\delta N_{1}(b-a)k,\gamma=1,...,\tau\\
&h(0,t_{j})=0,h(1,t_{j})=1, \,\text{if} \,S_{i_{\tau}}+\delta N_{1}(b+k)a\tau+1\leq j\leq S_{n}-\delta N_{1}(b+k)a\tau,\\
&h(0,t_{j})=0=h(1,t_{j}), \,\text{if}\,S_{n}+1\leq j\leq N_{1}.\\
\end{aligned}$$
 Note that $N=N_{1}(1-2\delta\tau(b+k)b)$, and one has that
 $$\frac{1}{N}\sum_{d=1}^N\phi_{d}(f)(0)=\frac{1}{N}\sum_{d=1}^{N}f(h(0,t_{d}))=\frac{1}{N}[\sum_{\iota=1}^{\tau}(r_{i_{\iota}}-m_{i_{\iota}})f(x_{i_{\iota}})+
(r_{1}-m_{1})f(0)],$$ and
$$\begin{aligned}
&\frac{1}{N}\sum_{d=1}^N\phi_{d}(f)(1)=\frac{1}{N}\sum_{d=1}^{N}f(h(1,t_{d}))\\
=&\frac{1}{N}[\sum_{\iota=1}^{\tau}(s_{i_{\iota}}-z_{i_{\iota}})f(x_{i_{\iota}})+(s_{n}-z_{n})f(1)+s_{1}f(0)].\\
\end{aligned}$$
Thus  $$\frac{1}{N}\sum_{d=1}^N\phi_{d}(f)(0)=\frac{b}{b+k}\frac{1}{N}\sum_{d=1}^N\phi_{d}(f)(1)$$ because of (3.9-3.12) and Corollary \ref{boundary}.

Next, we show that the average of these $N$ homomorphisms will approximate the average of the original $N_{1}$ homomorphisms:
$$\begin{aligned}
&|\frac{1}{N_{1}}\sum_{j=1}^{N_{1}}f(h(y,t_{j}))-\frac{1}{N}\sum_{d=1}^{N}f(h(y,t_{d}))|\\
=&|\frac{1}{N_{1}}\sum_{j=1}^{N_{1}}f(h(y,t_{j}))-\frac{1}{N_{1}(1-2\delta\tau(b+k)b)}\sum_{d=1}^{N}f(h(y,t_{d}))|\\
=&\frac{1}{N_{1}}|\sum_{j=1}^{N_{1}}f(h(y,t_{j}))-\frac{1}{(1-2\delta\tau(b+k)b)}\sum_{d=1}^{N}f(h(y,t_{d}))|\\
=&\frac{1}{N_{1}}|\sum_{d\notin D_{1}}f(h(y,t_{d})+(1-\frac{1}{1-2\delta\tau(b+k)b})\sum_{d=1}^{N}f(h(y,t_{d}))|\\
\leq&\frac{1}{N_{1}}(N_{1}2\delta\tau(b+k)b \sup_{f\in F}\|f\|+N_{1}2\delta\tau(b+k)b\sup_{f\in F}\|f\|)\\
=&4\delta\tau(b+k)b\sup_{f\in F}\|f\|\leq\frac{\epsilon}{4}\;(since\;\tau\leq n-2).\\
\end{aligned}$$

Case II: $\mu_{0}(X_{n})\neq0$.

Let $\delta$ satisfy 
 $$\begin{aligned}
&4\delta n(k+b)b\sup_{f\in F}\|f\|\leq\frac{\epsilon}{4},\\
&3\delta (b-a)b<\mu_{0}(X_{i_{\iota}}),\iota=1,2,...\tau,\\ 
&3\delta (b-a)b<\mu_{0}(X_{n}),\, 3\delta (a+k)(b+b\tau)<\mu_{0}(X_{1}),\\
&3\delta(b+k)(a\tau+b)<\mu_{1}(X_{n}).\\
\end{aligned}$$
By (3.7-3.8), we have
$$\begin{aligned}
&r_{i_{\iota}}> 2\delta N_{1}(b-a)b,\iota=1,2,...,\tau,\\
&r_{n}> 2\delta N_{1}(b-a)b,\\
&r_{1}>2\delta N_{1}(a+k)(1+\tau)b,\\
&s_{i_{\iota}}> 2\delta N_{1}(b-a)(b+k),\iota=1,2,...,\tau,\\
&s_{n}> 2\delta N_{1}(b+k)(a\tau+b).\\
\end{aligned}$$
So we suppose
$$\begin{aligned}
&m_{i_{\iota}}=2\delta N_{1}(b-a)b,\qquad\qquad z_{i_{\iota}}=2\delta N_{1}(b-a)(b+k),\iota=1,...,\tau,\\
&m_{n}=2\delta N_{1}(b-a)b,  \qquad\qquad z_{n}=2\delta N_{1}(b-a)b+t,\\
&m_{1}=2\delta N_{1}k\tau(b-a)+t,  \qquad z_{1}=0.\\
\end{aligned}$$
By (3.8), since
\[\frac{a}{a+k}[2\delta N_{1}k\tau(b-a)+t]+2\delta N_{1}(b-a)b=\frac{b}{b+k}[2\delta N_{1}(b-a)b+t],\]\\
one has $t=2\delta N_{1}[a\tau(b+k)+b(a+k)]$.
Then we take
$$\begin{aligned}
&m_{i_{\iota}}=2\delta N_{1}(b-a)b,\qquad\qquad z_{i_{\iota}}=2\delta N_{1}(b-a)(b+k),\iota=1,...,\tau,\\
&m_{n}=2\delta N_{1}(b-a)b,\qquad\qquad z_{n}=2\delta N_{1}(a\tau+b)(b+k),\\
&m_{1}=2\delta N_{1}(a+k)(a+\tau)b,\;\;\;z_{1}=0.\\
\end{aligned}$$
Consider the set $\Lambda_2$ of integers $j$ which belongs to one of the following intervals:
$$\begin{aligned}
&S_{i_{\gamma-1}}+\delta N_{1}(b-a)b+1\leq j\leq R_{i_{\gamma}}-2\delta N_{1}(b-a)b,\gamma=1,...,\tau,\\
&R_{i_{\gamma}}+1\leq j\leq S_{i_{\gamma}}-\delta N_{1}(b-a)k,\gamma=1,...,\tau,\\
&S_{i_{\tau}}+\delta N_{1}(b-a)b+1\leq j\leq R_{n}-\delta N_{1}(b-a)b,\\
&R_{n}+1\leq j\leq S_{n}-2\delta N_{2}(ab\tau+ak\tau+bk+ba),\\
&S_{n}+1\leq j\leq N_{1}.\\
\end{aligned}$$
Set $D_2=\{\frac{j}{N_1}\mid j\in\Lambda_2\}$ and $N=|D_2|$, then by the construction of $h(y,t)$, point evaluations of $h$ at the points in $D_2$ has the following conclusion:
$$\begin{aligned}
 &h(0,t_{j})=h(1,t_{j})=x_{i_{\gamma}}\!, \,\text{if}\,S_{i_{\gamma-1}}\!+\!\delta N_{1}(b-a)b\!+\!1\leq \!j\!\leq R_{i_{\gamma}}\!-\!\delta N_{1}(b-a)b,\gamma=1,...,\tau\\
 &h(0,t_{j})=0,h(1,t_{j})=x_{i_{\gamma}}, \,\text{if}\,R_{i_{\gamma}}+1\leq j\leq S_{i_{\gamma}}-\delta N_{1}(b-a)k,\gamma=1,...,\tau\\
 &h(0,t_{j})=h(1,t_{j})=1, \,\text{if}\,S_{i_{\tau}}+\delta N_{1}(b-a)b+1\leq j\leq R_{n}-\delta N_{1}(b-a)b,\\
 &h(0,t_{j})=0,h(1,t_{j})=1, \,\text{if}\,R_{i_{\tau}}++1\leq j\leq S_{n}-2\delta N_{1}(ab\tau+ak\tau+bk+ba),\\
 &h(0,t_{j})=0=h(1,t_{j}), \,\text{if}\,S_{n}+1\leq j\leq N_{1}.\\
\end{aligned}$$
Note that $N=N_{1}(1-4\delta(1+\tau)(k+b)b)$, one has that 
 $$\frac{1}{N}\sum_{d=1}^N\phi_{d}(f)(0)=\frac{1}{N}[\sum_{\iota=1}^{\tau}(r_{i_{\iota}}-m_{i_{\iota}})f(x_{i_{\iota}})+(r_{n}-m_{n})f(1)+(r_{1}-m_{1})f(0)],$$
 and
$$\frac{1}{N}\sum_{d=1}^N\phi_{d}(f)(1)=\frac{1}{N}[\sum_{\iota=1}^{\tau}(s_{i_{\iota}}-z_{i_{\iota}})f(x_{i_{\iota}})+(s_{n}-z_{n})f(1)+s_{1}f(0)].$$
Thus  $$\frac{1}{N}\sum_{d=1}^N\phi_{d}(f)(0)=\frac{b}{b+k}\frac{1}{N}\sum_{d=1}^N\phi_{d}(f)(1)$$ because of (3.9-3.12) and Corollary \ref{boundary}.

Next, we show the average of these $N$ homomorphisms will approximate the average of the original $N_{1}$ homomorphisms:
$$\begin{aligned}
&|\frac{1}{N_{1}}\sum_{j=1}^{N_{1}}f(h(y,t_{j}))-\frac{1}{N}\sum_{d=1}^{N}f(h(y,t_{d}))|\\
=&|\frac{1}{N_{1}}\sum_{j=1}^{N_{1}}f(h(y,t_{k}))-\frac{1}{N_{1}(1-2\delta(1+\tau)(k+b)b}\sum_{d=1}^{N}f(h(y,t_{d}))|\\
=&\frac{1}{N_{1}}|\sum_{j=1}^{N_{1}}f(h(y,t_{j}))-\frac{1}{1-2\delta(1+\tau)(k+b)b}\sum_{d=1}^{N}f(h(y,t_{d}))|\\
=&\frac{1}{N_{1}}|\sum_{d\notin D_{2}}f(h(y,t_{d})+(1-\frac{1}{1-2\delta(1+\tau)(k+b)b}\sum_{d=1}^{N}f(h(y,t_{d}))|\\
\leq&\frac{1}{N_{2}}(N_{1}2\delta(1+\tau)(k+b)b\sup_{f\in F}\|f\|+N_{1}2\delta(1+\tau)(k+b)b)\sup_{f\in F}\|f\|)\\
=&4\delta(1+\tau)(k+b)b \sup_{f\in F}\|f\|\leq\frac{\epsilon}{4} \;(since\; \tau\leq n-2)\\
\end{aligned}$$

In other words, no matter in which cases we can always find $N$ functions $h(y, t_d), d=1,...,N$, as required.

Finally, let us define $\phi_{d}:C[0,1]\rightarrow C[0,1]$ by $\phi_{d}(f(y))=f(h(y,t_{d}))$, for $d=1,...,N$, then
$$\begin{aligned}
&|\phi(f)(y)-\frac{1}{N}\sum_{d=1}^{N}\phi_{j}(f)(y)|\\
=&|\phi(f)(y)-\frac{1}{N}\sum_{d=1}^{N}f(h(y,t_{d}))|\\
\leq&|\phi(f)(y)-\sum_{i=1}^{n}\lambda_{i}(y)f(x_{i})|+|\sum_{i=1}^n\lambda_{i}(y)f(x_{i})-w(f)(y)|\\
+&|w(f)(y)-\frac{1}{N_{1}}\sum_{j=1}^{N_{1}}f(h(y,t_{j}))|+|\frac{1}{N_{1}}\sum_{j=1}^{N_{1}}f(h(y,t_{j}))-\frac{1}{N}\sum_{d=1}^{N}f( h(y,t_{d}))|\\
\leq&\frac{\epsilon}{4}+\frac{\epsilon}{4}+\frac{\epsilon}{4}+\frac{\epsilon}{4}\leq\epsilon\\
\end{aligned}$$
\end{proof}

\begin{remark} 
1, In the proof of Theorem \ref{thm2},  rather than Theorem \ref{thm1}, we could only achieve that the number of homomorphisms depends on the finite subset $F$, $\epsilon$ and $\phi$.

2, In an overview of the approximation procedure above, one can see that the crucial thing is the distribution of the measures induced by evaluations of $\phi$ at 0 and 1, which determines the boundary condition at algebra level. This idea will go through in general cases.

\end{remark}

Next, we consider general subspaces of $C[0,1]$: $$C[0,1]_\alpha=\{f\in C[0,1] \mid f(0)=\alpha f(1)\},$$ where $0< \alpha<1$. Obviously, it is also a subspace of co-dimension one. Applying similar proofs as in Lemma \ref{point mass}, Lemma \ref{mea2},  Corollary \ref{lem:mea} and Corollary \ref{boundary}, parallel results hold in this general setting, namely, the corresponding measures induced by evaluations of a Markov operator at 0 and 1 have similar features.

\begin{lem}\label{mea3}
Given any unital positive linear map $\phi$ on $C[0,1]$ which sends $C[0,1]_{\alpha}$ to $C[0,1]_{\beta}$, denote by $\mu_{0}$ and $\mu_{1}$ the measures induced by evaluations of $\phi$ at 0 and 1. Let there be given a partition $\{X_{1},X_{2},...,X_{n}\}$ of $[0,1]$, where $X_{i}$ is a connected Borel set and $0\in X_{1},1\in X_{n}$.
Then we have the following distribution of $\mu_{0}$ and $\mu_{1}$ with respect to the partition:
$$\begin{aligned}
&\mu_{0}(X_{i})=\beta\mu_{1}(X_{i}) \;for \;i=2,...,n-1,\\
&\alpha\mu_{0}(X_{1})+\mu_{0}(X_{n})=\beta[\alpha\mu_{1}(X_{1})+\mu_{1}(X_{n})].\\
\end{aligned}$$
\end{lem}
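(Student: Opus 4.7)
The plan is to mirror the argument of Lemma \ref{mea2} verbatim, replacing the rational parameters $\frac{a}{a+k}$ and $\frac{b}{b+k}$ by the general constants $\alpha,\beta\in(0,1)$. The only property of these constants that was actually used in Lemma \ref{mea2} was that they lie in $(0,1)$ and parametrise the subspaces $C[0,1]_{(a,k)}$ and $C[0,1]_{(b,k)}$; since Lemma \ref{small} applies to any Borel probability measure, everything transfers. As there, for each fixed $y\in[0,1]$ the functional $f\mapsto\phi(f)(y)$ is a positive Borel probability measure; call them $\mu_0$ and $\mu_1$ for $y=0,1$, so that $\phi(f)(0)=\int f\,d\mu_0$ and $\phi(f)(1)=\int f\,d\mu_1$, and for every $g\in C[0,1]_\alpha$ we have $\int g\,d\mu_0=\beta\int g\,d\mu_1$ since $\phi(g)\in C[0,1]_\beta$.

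For the interior relation $\mu_0(X_i)=\beta\mu_1(X_i)$ with $2\le i\le n-1$, I would fix $\varepsilon>0$ and, using Lemma \ref{small}, choose $\delta>0$ small enough that each of the four punctured intervals of width $\delta$ flanking $X_i$ has $\mu_0$- and $\mu_1$-mass less than $\varepsilon$. Then I build a continuous bump $g_i$ which equals $1$ on $X_i$, equals $0$ outside a $\delta$-enlargement of $X_i$, and interpolates linearly; since $g_i(0)=g_i(1)=0$, automatically $g_i\in C[0,1]_\alpha$. Splitting $\int g_i\,d\mu_0$ and $\int g_i\,d\mu_1$ into the contribution from $X_i$ (which is exactly $\mu_0(X_i)$, resp.\ $\mu_1(X_i)$) plus error terms of size $<2\varepsilon$, and invoking $\int g_i\,d\mu_0=\beta\int g_i\,d\mu_1$, I obtain $|\mu_0(X_i)-\beta\mu_1(X_i)|<(1+2\beta)\varepsilon$. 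Letting $\varepsilon\to 0$ gives the desired equality. Minor case analysis in the spirit of Lemma \ref{mea2} handles whether the endpoints $a_i,b_i$ of $X_i$ belong to $X_i$ or not.

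For the boundary relation, I would similarly choose a small $\delta$ (again by Lemma \ref{small}) controlling the punctured $\delta$-neighbourhoods of the inner endpoints of $X_1$ and $X_n$, and build a continuous function $g$ that equals $\alpha$ on $X_1$, equals $1$ on $X_n$, vanishes on the intermediate bulk, and interpolates linearly across the buffer strips. Because $g(0)=\alpha=\alpha\cdot g(1)$, we have $g\in C[0,1]_\alpha$. Computing $\int g\,d\mu_0=\alpha\mu_0(X_1)+\mu_0(X_n)+\varepsilon_0$ and $\int g\,d\mu_1=\alpha\mu_1(X_1)+\mu_1(X_n)+\varepsilon_1$ with $|\varepsilon_0|,|\varepsilon_1|\le(1+\alpha)\varepsilon$, and using $\int g\,d\mu_0=\beta\int g\,d\mu_1$, the arbitrariness of $\varepsilon$ yields the stated equality $\alpha\mu_0(X_1)+\mu_0(X_n)=\beta[\alpha\mu_1(X_1)+\mu_1(X_n)]$.

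The main obstacle is really only bookkeeping: one must verify that all auxiliary bump functions truly lie in $C[0,1]_\alpha$ (which forces the boundary values $0,0$ in the interior case and $\alpha,1$ in the endpoint case), and one must track error terms uniformly in $\varepsilon$ through the four possible sub-cases of whether the inner endpoints of $X_1$, $X_i$, $X_n$ are attained. No new idea beyond Lemma \ref{mea2} is required; the rational expressions $\frac{a}{a+k}$, $\frac{b}{b+k}$ play no role in that proof beyond being the constants tying $\phi(f)(0)$ to $\phi(f)(1)$, and the verification carries over mutatis mutandis.
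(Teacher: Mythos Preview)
Your proposal is correct and is exactly the approach the paper takes: the paper does not give a separate proof of Lemma \ref{mea3} but simply remarks that ``applying similar proofs as in Lemma \ref{point mass}, Lemma \ref{mea2}, \ldots, parallel results hold in this general setting,'' and your write-up spells out precisely this substitution of $\alpha,\beta$ for $\tfrac{a}{a+k},\tfrac{b}{b+k}$ in the argument of Lemma \ref{mea2}. Your observations that the test functions lie in $C[0,1]_\alpha$ and that rationality plays no role are exactly the points needed to justify the transfer.
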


\begin{cor}\label{lem:gmea1}
Given any unital positive linear map $\phi$ from $C[0,1]$ to $C[0,1]$  which sends $C[0,1]_{\alpha}$ to $C[0,1]_{\beta}$, denote by $\mu_{0}$ and $\mu_{1}$ the measures induced by evaluations of $\phi$ at 0 and 1. Let there be given a partition  $\{X_{1},X_{2},...,X_{n}\}$ of $[0,1]$, where $X_{i}$ is a connected Borel set and $0\in X_{1},1\in X_{n}$.
Then we have the following distribution of $\mu_0$ and $\mu_1$ with respect to the partition:
$$\begin{aligned}
&\mu_{0}(X_{1})=\beta\mu_{1}(X_{1})+\frac{1-\beta}{1-\alpha},\\
&\mu_{0}(X_{n})=\beta\mu_{1}(X_{n})-\frac{1-\beta}{1-\alpha}\alpha.\\
\end{aligned}$$
In particularly, if $\alpha=\beta$, then $\phi(f)(0)=f(0), \phi(f)(1)=f(1)$ for all $f\in C[0,1]$.
\end{cor}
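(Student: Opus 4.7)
The plan is to mirror the proof of Corollary \ref{lem:mea}, replacing the specific fractions $\tfrac{a}{a+k}$ and $\tfrac{b}{b+k}$ with $\alpha$ and $\beta$, and then handle the special case $\alpha=\beta$ as a consequence of the formulas.

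First I would combine the two relations from Lemma \ref{mea3}. Summing $\mu_{0}(X_{i})=\beta\mu_{1}(X_{i})$ for $i=2,\ldots,n-1$ and adding the relation $\alpha\mu_{0}(X_{1})+\mu_{0}(X_{n})=\beta[\alpha\mu_{1}(X_{1})+\mu_{1}(X_{n})]$ yields
\[
\sum_{i=2}^{n-1}\mu_{0}(X_{i})+\alpha\mu_{0}(X_{1})+\mu_{0}(X_{n})
=\beta\sum_{i=2}^{n-1}\mu_{1}(X_{i})+\beta[\alpha\mu_{1}(X_{1})+\mu_{1}(X_{n})].
\]
Next I would use $\mu_{0}([0,1])=\mu_{1}([0,1])=1$ to rewrite $\sum_{i=2}^{n-1}\mu_{j}(X_{i})=1-\mu_{j}(X_{1})-\mu_{j}(X_{n})$ for $j=0,1$. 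Substituting and collecting terms, the $\mu_{0}(X_{n})$ and $\mu_{1}(X_{n})$ contributions cancel on each side, leaving
\[
1-(1-\alpha)\mu_{0}(X_{1})=\beta-\beta(1-\alpha)\mu_{1}(X_{1}),
\]
from which $\mu_{0}(X_{1})=\beta\mu_{1}(X_{1})+\tfrac{1-\beta}{1-\alpha}$ follows by dividing by $1-\alpha$. Then substituting this back into $\alpha\mu_{0}(X_{1})+\mu_{0}(X_{n})=\beta[\alpha\mu_{1}(X_{1})+\mu_{1}(X_{n})]$ and simplifying delivers the second formula $\mu_{0}(X_{n})=\beta\mu_{1}(X_{n})-\tfrac{1-\beta}{1-\alpha}\alpha$.

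For the ``In particular'' assertion with $\alpha=\beta$, the first formula becomes $\mu_{0}(X_{1})=\alpha\mu_{1}(X_{1})+1$; since $\mu_{0}(X_{1})\leq 1$ and $\alpha\mu_{1}(X_{1})\geq 0$, this forces $\mu_{1}(X_{1})=0$ and $\mu_{0}(X_{1})=1$. Symmetrically, the second formula gives $\mu_{1}(X_{n})=1$ and $\mu_{0}(X_{n})=0$. Taking partitions in which $X_{1}$ shrinks to $\{0\}$ and $X_{n}$ shrinks to $\{1\}$ (exactly as in the proof of Lemma \ref{point mass}), one concludes $\mu_{0}(\{0\})=1$ and $\mu_{1}(\{1\})=1$, which is precisely $\phi(f)(0)=f(0)$ and $\phi(f)(1)=f(1)$ for all $f\in C[0,1]$.

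There is no serious obstacle: the argument is a direct transcription of Corollary \ref{lem:mea} with $\tfrac{a}{a+k}\leadsto\alpha$, $\tfrac{b}{b+k}\leadsto\beta$, and the only point requiring a bit of care is checking that the denominator $1-\alpha$ in the isolation step is nonzero, which is guaranteed by the standing hypothesis $\alpha\in(0,1)$. The ``in particular'' case is then a straightforward sign-and-bound observation plus a shrinking-partition argument already familiar from Lemma \ref{point mass}.
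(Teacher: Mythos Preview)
Your proof is correct and follows exactly the approach the paper intends: the paper gives no explicit proof here but simply defers to the parallel argument of Corollary~\ref{lem:mea}, which is precisely the substitution $\tfrac{a}{a+k}\leadsto\alpha$, $\tfrac{b}{b+k}\leadsto\beta$ you carry out. Your treatment of the special case $\alpha=\beta$ via the sign-and-bound argument and shrinking partitions is also correct and fills in a detail the paper leaves implicit.
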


Once the distribution of the induced measures at 0 and 1 has similar features, then the corresponding approximation results hold.  

\begin{thrm}\label{same} Given any finite subset $ F \subset C[0,1]_\alpha$ and $\epsilon>0$, there is an integer $ N>0$ with the following property:
for any unital positive linear map $\phi$ on $C[0,1]$ which preserves $C[0,1]_\alpha$, there are N homomorphisms $\phi_{1},\phi_{2},...,\phi_{N}$ from $C[0,1]$ to $C[0,1]$ such that $\dfrac{1}{N}\sum\limits_{i=1}^{N}\phi_{i}(f)\in C[0,1]_\alpha$ for all $f\in C[0,1]_\alpha$ and
$$\|\phi(f)-\frac{1}{N}\sum_{i=1}^N\phi_{i}(f)\| < \epsilon$$ for all $f\in F$.
\end{thrm}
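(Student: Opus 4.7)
The plan is to adapt the proof of Theorem \ref{thm1} almost verbatim, substituting $\alpha$ for $\frac{a}{a+k}$ throughout. The essential input is Corollary \ref{lem:gmea1} applied in the special case $\alpha = \beta$: if $\phi$ is a unital positive linear map on $C[0,1]$ preserving $C[0,1]_\alpha$, then the measures $\mu_0, \mu_1$ induced by evaluation at the endpoints are the point masses at $0$ and $1$, so $\phi(f)(0) = f(0)$ and $\phi(f)(1) = f(1)$ for every $f \in C[0,1]$. This is the precise analogue of Lemma \ref{point mass} used in the proof of Theorem \ref{thm1}, and it is the only place where the subspace-preservation property is used.

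Granting this, I would run the three steps of the proof of Theorem \ref{thm1} without structural change. In Step I, I fix a $\delta_0$-dense set $\{x_1 = 0, x_2, \ldots, x_{n-1}, x_n = 1\}$ and a partition $\{X_1, \ldots, X_n\}$ with $x_i \in X_i$, then produce continuous weights $\lambda_i(y)$ via a partition of unity subordinate to an open cover so that $|\phi(f)(y) - \sum_i \lambda_i(y) f(x_i)| < \epsilon/4$ for all $f \in F$ and $y \in [0,1]$. The endpoint concentration from Corollary \ref{lem:gmea1} forces $\lambda_1(0) = \lambda_n(1) = 1$ and all other boundary values vanish. In Step II, I build the cumulative function $h(y,t)$ on $[0,1]^2$ by the same interpolation scheme employing $G_j(y) = \sum_{i \le j} \lambda_i(y)$, so that $w(f)(y) = \int_0^1 f(h(y,t))\,dt$ approximates $\sum_i \lambda_i(y) f(x_i)$ to within $\epsilon/4$. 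Crucially, $h(0,t) \equiv 0$ and $h(1,t) = 1$ outside a pair of intervals of length $\delta$ at the ends of $[0,1]$.

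In Step III, I choose $N_1$ with $1/N_1 < \delta \delta_0$ and sample $t_j = j/N_1$, then retain only the indices with $\delta \le t_j \le 1 - \delta$, leaving $N = \lfloor (1-\delta)N_1 \rfloor - \lceil \delta N_1 \rceil + 1$ points. For each retained $j$ I set $\phi_j(f)(y) = f(h(y, t_j))$; by construction $h(0, t_j) = 0$ and $h(1, t_j) = 1$. Consequently, for $f \in C[0,1]_\alpha$,
$$\frac{1}{N}\sum_{j} \phi_j(f)(0) = f(0) = \alpha f(1) = \alpha \cdot \frac{1}{N}\sum_{j} \phi_j(f)(1),$$
so the average lies in $C[0,1]_\alpha$. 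The effect of discarding the boundary indices contributes at most $5\delta \sup_{f \in F}\|f\|$ to the approximation error, absorbed into $\epsilon/4$ by choosing $\delta$ small, and the four $\epsilon/4$ estimates combine via the triangle inequality.

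There is no serious obstacle beyond bookkeeping. The real work has already been done in Corollary \ref{lem:gmea1}, which transfers the endpoint concentration from the specific ratio $\frac{a}{a+k}$ to an arbitrary $\alpha \in (0,1)$. Once this is in hand, the $N$ produced for Theorem \ref{thm1} works uniformly for every admissible $\phi$, and the proof requires no additional rational-approximation step of the kind needed for Theorem \ref{thm2} (since here $\alpha = \beta$, the partition masses are already compatible without adjustment).
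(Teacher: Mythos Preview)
Your proposal is correct and matches the paper's approach exactly: the paper's proof consists of a single sentence noting that Corollary \ref{lem:gmea1} gives the point-mass property at the endpoints, after which the proof of Theorem \ref{thm1} goes through verbatim. You have spelled out precisely this reduction, including the correct identification of Corollary \ref{lem:gmea1} (with $\alpha=\beta$) as the replacement for Lemma \ref{point mass}.
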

\begin{proof}
Because the measures induced by evaluations of $\phi$ at 0 and 1 are still point mass measures by Corollary \ref{lem:gmea1}, similar proofs as in Theorem \ref{thm1} go through. 
\end{proof}
 
\begin{thrm}\label{rational} Let $\alpha$, $\beta$ be rational numbers and $\beta>\alpha$, given any finite subset $ F \subset C[0,1]_\alpha$ and $\epsilon>0$, for any unital positive linear map $\phi$ on $C[0,1]$ which sends $C[0,1]_\alpha$ to $C[0,1]_\beta$ , there are N homomorphisms $\phi_{1},\phi_{2},...,\phi_{N}$ from $C[0,1]$ to $C[0,1]$ such that $\dfrac{1}{N}\sum_{i=1}^N\phi_{i}(f)\in C[0,1]_\beta$ for all $f\in C[0,1]_\alpha$ and
$$\|\phi(f)-\frac{1}{N}\sum_{i=1}^N\phi_{i}(f)\| < \epsilon$$ for all $f\in F$.
\end{thrm}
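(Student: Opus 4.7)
The plan is to mimic the four-step architecture of the proof of Theorem \ref{thm2}, replacing the fractions $\tfrac{a}{a+k}$ and $\tfrac{b}{b+k}$ throughout by the rational numbers $\alpha$ and $\beta$. The rationality hypothesis on $\alpha$ and $\beta$ is what lets us keep the various counting numbers (the analogues of $s_i$, $r_i$, $m_i$, $z_i$) as integers; the hypothesis $\beta>\alpha$ is what guarantees that the analogue of $r_1$ is nonnegative and that the slack $t$ introduced when $\mu_0(X_n)=0$ in Case I (or its counterpart in Case II) can be solved for as a positive quantity.

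First I would invoke Corollary \ref{lem:gmea1} in place of Corollary \ref{lem:mea}, obtaining for any partition $\{X_1,\dots,X_n\}$ of $[0,1]$ with $0\in X_1$, $1\in X_n$ the relations
\begin{align*}
\mu_0(X_i) &= \beta\,\mu_1(X_i),\quad i=2,\dots,n-1,\\
\mu_0(X_1) &= \beta\,\mu_1(X_1)+\tfrac{1-\beta}{1-\alpha},\\
\mu_0(X_n) &= \beta\,\mu_1(X_n)-\tfrac{(1-\beta)\alpha}{1-\alpha}.
\end{align*}
Write $\alpha=p/q$, $\beta=p'/q'$ in lowest terms. I would then restate Lemma \ref{app} in this setting, approximating each $\mu_1(X_i)$ from above by rationals $s_i$ with a tiny error and setting $r_i=\beta s_i$ for $2\le i\le n-1$; the remaining $s_1, s_n, r_1, r_n$ are then fixed so that the two boundary identities above survive exactly. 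Existence of nonnegative $r_1$ uses $\beta>\alpha$. I would also reformulate Corollary \ref{boundary} accordingly: if integer weights $k_1,k_n,m_1,m_n$ at the endpoints satisfy $\alpha k_1+k_n=\beta(\alpha m_1+m_n)$ and interior weights occur in $\beta\,{:}\,1$ ratios, then the weighted sum lies in $C[0,1]_\beta$ for every $f\in C[0,1]_\alpha$.

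With these prerequisites in place, Steps I–III of the proof of Theorem \ref{thm2} go through verbatim, producing an integer $N_1$, values $t_j=j/N_1$, a continuous $h(y,t)$, and homomorphisms $\phi_j(f)(y)=f(h(y,t_j))$ whose average is within $3\varepsilon/4$ of $\phi(f)$ on $F$. Choose $N_1$ so that $N_1 s_i$ and $N_1 r_i$ are all integers; this is exactly where the rationality of $\alpha$ and $\beta$ is used. The main obstacle lies in Step IV: identifying a precise sub-collection of the $N_1$ sample points whose average satisfies the $(\alpha,\beta)$-boundary condition while still approximating the full average. I would split into Case I ($\mu_0(X_n)=0$) and Case II ($\mu_0(X_n)\neq 0$) exactly as in Theorem \ref{thm2}, in each case setting up the thrown-out counts $m_{i_\iota},z_{i_\iota},m_1,z_1,m_n,z_n$ to obey
\[
\frac{m_{i_\iota}}{z_{i_\iota}}=\beta,\qquad \alpha m_1+m_n=\beta(\alpha z_1+z_n),
\]
and solving the single linear equation for the free parameter $t$; positivity of the solution $t$ requires $\beta>\alpha$. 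The bounds on $\delta$ needed to ensure that the choices $m_{i_\iota},z_{i_\iota},\dots$ remain strictly below the available $r_{i_\iota},s_{i_\iota},r_1,s_n,\dots$ are exact analogues of those in Theorem \ref{thm2}, with $b-a$ replaced by (a positive multiple of) $\beta-\alpha$.

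The final estimate then combines, in the usual four-triangle fashion, the approximation of $\phi(f)(y)$ by $\sum_i \lambda_i(y) f(x_i)$, of that by $w(f)(y)$, of $w(f)$ by the $N_1$-term average, and of the $N_1$-term average by the $N$-term average; each contributes $\varepsilon/4$. By the analogue of Corollary \ref{boundary}, the reduced $N$-term average lies in $C[0,1]_\beta$ whenever $f\in C[0,1]_\alpha$, which is what is required. I expect the bookkeeping in Step IV—keeping track of which intervals of indices contribute which boundary values and verifying the two integer identities with the chosen $m$'s and $z$'s—to be the only genuinely delicate part; the rest is a direct transcription with $\alpha,\beta$ in place of $\tfrac{a}{a+k},\tfrac{b}{b+k}$.
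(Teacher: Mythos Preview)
Your proposal is correct and follows exactly the approach the paper takes: the paper's own proof simply cites Lemma \ref{mea3} and Corollary \ref{lem:gmea1} and asserts that the analogues of Lemma \ref{mea2}, Corollary \ref{lem:mea}, and Corollary \ref{boundary} hold, whence the proof of Theorem \ref{thm2} goes through verbatim with $\alpha,\beta$ in place of $\tfrac{a}{a+k},\tfrac{b}{b+k}$. Your write-up is in fact more explicit than the paper's about where the rationality of $\alpha,\beta$ (integrality of the $N_1 s_i$, $N_1 r_i$, and the discard counts) and the inequality $\beta>\alpha$ (positivity of $r_1$ and of the auxiliary parameter $t$) enter, which is helpful.
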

\begin{proof}

Based on Lemma \ref{mea3} and Corollary \ref{lem:gmea1}, parallel results as Lemma \ref{mea2},  Corollary \ref{lem:mea} and Corollary \ref{boundary} hold, then similar proofs as in Theorem \ref{thm2} go through.
\end{proof}

\begin{remark}
Because of some technicalities in the approximation procedure, mainly about some integer issue at certain points, we restrict our attentions on the case that $\alpha, \beta$ being rational numbers in the theorem above.

\end{remark}

\section{generalizations}

In general, we replace $[0,1]$ by compact metrizable spaces $X$ and $Y$. Let $p,q$ and $z, w$ be fixed points in $X$ and $Y$, $\alpha, \beta\in(0,1)$, and consider subspaces:$$C(X)_{(p, q, \alpha)}=\{f\in C(X) \mid f(p)=\alpha f(q)\},$$ and $$C(Y)_{(z, w, \beta)}=\{f\in C(Y) \mid f(z)=\beta f(w)\}.$$

Similarly, in this case, the corresponding measures have  the same features.
\begin{lem}\label{mea4}
Let there be given compact metrizable spaces $X$ and $Y$. Given any unital positive linear map $\phi$ from $C(X)$ to $C(Y)$ which sends $C(X)_{(p,q,\alpha)}$ to $C(Y)_{(z,w,\beta)}$, denote by $\mu_{z}$, $\mu_{w}$ the measures induced by evaluations of $\phi$ at $z$ and $w$. Let there be given a partition $\{X_{1},X_{2},...,X_{n}\}$ of $X$, where $X_{i}$ is a connected Borel set and $p\in X_{1},q\in X_{n}$.

Then we have the following distribution of $\mu_{z}$ and $\mu_{w}$ with respect to the partition:
$$\begin{aligned}
&\mu_{z}(X_{i})=\beta\mu_{w}(X_{i}) \;for \;i=2,...,n-1,\\
&\alpha\mu_{z}(X_{1})+\mu_{z}(X_{n})=\beta[\alpha\mu_{w}(X_{1})+\mu_{w}(X_{n})].\\
\end{aligned}$$
\end{lem}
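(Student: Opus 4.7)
The plan is to mimic the proof of Lemma \ref{mea2}, replacing the explicit piecewise-linear bump functions used on $[0,1]$ by test functions constructed via regularity of Borel measures together with Tietze's extension theorem. Since $X$ is compact metrizable, the Borel probability measures $\mu_z$ and $\mu_w$ on $X$ are regular; in particular, every Borel set can be sandwiched between a compact subset and an open superset whose measures differ by less than a prescribed $\eta>0$. This will let me build continuous functions in $C(X)_{(p,q,\alpha)}$ whose integrals against $\mu_z$ and $\mu_w$ approximate the measures of individual partition pieces $X_i$ (respectively linear combinations at the ends), which, combined with the identity $\phi(g)(z)=\beta\phi(g)(w)$ forced by the hypothesis on $\phi$, yields the two claimed identities in the limit.

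For the first identity, fix $i\in\{2,\dots,n-1\}$ and $\eta>0$. Using inner and outer regularity, I would choose a compact set $K_i\subseteq X_i$ and a compact set $L_i\subseteq X\setminus X_i$ with $\{p,q\}\subseteq L_i$, satisfying $\mu_z(X\setminus(K_i\cup L_i))<\eta$ and $\mu_w(X\setminus(K_i\cup L_i))<\eta$; this is possible since $p,q\notin X_i$. Because $K_i$ and $L_i$ are disjoint compact sets in the normal space $X$, Tietze/Urysohn yields a continuous $g_i:X\to[0,1]$ with $g_i|_{K_i}=1$ and $g_i|_{L_i}=0$. Then $g_i(p)=g_i(q)=0$, so $g_i\in C(X)_{(p,q,\alpha)}$ and consequently $\phi(g_i)(z)=\beta\phi(g_i)(w)$. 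Splitting the integrals $\int g_i\,d\mu_z$ and $\int g_i\,d\mu_w$ over $K_i$, $L_i$, and the transition region gives $|\phi(g_i)(z)-\mu_z(X_i)|<3\eta$ and $|\phi(g_i)(w)-\mu_w(X_i)|<3\eta$, so $|\mu_z(X_i)-\beta\mu_w(X_i)|<(3+3\beta)\eta$; letting $\eta\to 0$ delivers the equality.

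For the second identity, I would construct a single test function $g\in C(X)_{(p,q,\alpha)}$ that is (approximately) $\alpha$ on $X_1$, $1$ on $X_n$, and $0$ on the middle. Concretely, pick compact $K_1\subseteq X_1$ with $p\in K_1$, compact $K_n\subseteq X_n$ with $q\in K_n$, and compact $K'\subseteq\bigcup_{i=2}^{n-1}X_i$, each capturing the corresponding set up to $\mu_z$- and $\mu_w$-measure $\eta$. Applying Tietze to the disjoint closed sets $K_1$, $K_n$, $K'$ with prescribed values $\alpha$, $1$, $0$ produces a continuous $g:X\to[0,1]$ with $g(p)=\alpha g(q)$, hence $g\in C(X)_{(p,q,\alpha)}$. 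The estimates $|\phi(g)(z)-(\alpha\mu_z(X_1)+\mu_z(X_n))|=O(\eta)$ and $|\phi(g)(w)-(\alpha\mu_w(X_1)+\mu_w(X_n))|=O(\eta)$, combined with $\phi(g)(z)=\beta\phi(g)(w)$, then yield the relation after passing $\eta\to 0$.

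The main obstacle, compared to the one-dimensional argument, is that in $[0,1]$ the boundary of each $X_i$ consists of at most two points and one controls measure near them via Lemma \ref{small}, whereas here the topological boundary $\partial X_i$ can be large and need not have measure zero. The workaround is precisely the use of inner/outer regularity of Borel probability measures on compact metric spaces to replace pointwise control by control on compact cores of $X_i$ and of its complement; once this replacement is in place, the rest of the argument is a direct transcription of the proof of Lemma \ref{mea2}, with connectedness of the $X_i$ playing no role at this step (it is needed only downstream in the construction of homomorphisms).
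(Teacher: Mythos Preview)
Your proposal is correct and follows essentially the same route as the paper's own proof: both arguments replace the explicit piecewise-linear bumps of Lemma~\ref{mea2} by test functions constructed via regularity of $\mu_z,\mu_w$ together with Urysohn/Tietze, arrange the values at $p$ and $q$ so that the test function lies in $C(X)_{(p,q,\alpha)}$, apply $\phi(g)(z)=\beta\,\phi(g)(w)$, and let the approximation parameter tend to zero. The only cosmetic difference is packaging---the paper sandwiches each $X_i$ between a closed inner set and an open outer set, while you work with compact subsets of $X_i$ and of its complement---and your closing remark that connectedness of the $X_i$ is not used here is accurate.
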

\begin{proof} First of all we prove the first relation for $i=2,...,n-1.$

 By the regularity of $\mu_{z},\mu_{w}$, for any $\epsilon>0$, there exist open sets $O_{i0},O_{i1}\supseteq X_{i}$, closed sets $F_{i0},F_{i1}\subseteq X_{i}$ with $ \mu_{z}(O_{i0}\setminus F_{i0})<\epsilon $, $ \mu_{w}(O_{i1}\setminus F_{i1})<\epsilon $.
Let $F_{i}=F_{i0}\cup F_{i1}\subseteq X_{i}$, then $F_{i}$ is closed and $\mu_{z}(X_{i}\setminus F_{i})<\epsilon $, $ \mu_{w}(X_{i}\setminus F_{i})<\epsilon$.
Since $p\notin X_{i}, q\notin X_{i}$, then $p\notin F_{i}, q\notin F_{i}$, then there are open sets $Q_{i0},Q_{i1}\supseteq F_{i}$, such that $p\notin Q_{i0}$, $q\notin Q_{i1}$.
Let $O_{i}=O_{i0}\cap O_{i1}\cap Q_{i0}\cap Q_{i1} $, then $O_{i}$ is open ,$O_{i}\supseteq F_{i}$ and $\mu_{z}(O_{i}\setminus F_{i})<\epsilon $, $ \mu_{w}(O_{i}\setminus F_{i})<\epsilon $.
Since $X$ is a normal space, and $ O_{i}^{c}\cap F_{i}=\varnothing$, by Urysohn's Lemma, there is a continuous function $f_{i}:X\rightarrow[0,1]$ such that $f_{i}(x)={1}$ if $x\in F_{i}$, $f_{i}(x)={0}$ if $x\in O_{i}^{c}$; moreover, $f_{i}\in C(X)_{(p,q,\alpha)}$ since $p, q\notin O_{i}$.
Then $\phi(f_{i})\in C(Y)_{(z,w,\beta)}$, while
  \[\phi(f_{i})(z)=\int_{X}f_{i}d\mu_{z}=\mu_{z}(F_{i})+\int_{(O_{i}\setminus F_{i})}f_{i}d\mu_{z},\]
  \[\phi(f_{i})(w)=\int_{X}f_{i}d\mu_{w}=\mu_{w}(F_{i})+\int_{O_{i}\setminus F_{i})}f_{i}d\mu_{w},\] so
\[\beta=\frac{\phi(f_{i})(z)}{\phi(f_{i})(w)}=\frac{\mu_{z}(F_{i})+\int_{(O{i}\setminus F_{i})}f_{i}d\mu_{z}}{\mu_{w}(F_{i})+\int_{(O_{i}\setminus F_{i})}f_{i}d\mu_{w}}.\]
 Then $$\beta\mu_{w}(F_{i})+\beta\int_{(O_{i}\setminus F_{i})}f_{i}d\mu_{w}=\mu_{z}(F_{i})+\int_{(O_{i}\setminus F_{i})}f_{i}d\mu_{z},$$ and
$$\begin{aligned}
&|\beta\mu_{w}(X_{i})-\mu_{z}(X_{i})|\\
=&|\beta[\mu_{w}(X_{i})-\mu_{w}(F_{i})]-[\mu_{z}(X_{i})-\mu_{z}(F_{i})]\\
+&\int_{(O_{i}\setminus F_{i})}f_{i}d\mu_{z}-\beta\int_{(O_{i}\setminus F_{i})}f_{i}d\mu_{w}|\\
=&|\beta\mu_{w}(X_{i}\setminus F_{i})-(\mu_{z}(X_{i}\setminus F_{i})+\int_{(O_{i}\setminus F_{i})}f_{i}d\mu_{z}-\beta\int_{(O_{i}\setminus F_{i})}f_{i}d\mu_{w}|\\
\leq&4\varepsilon.\\
 \end{aligned}$$
Then $ \beta\mu_{w}(X_{i})=\mu_{z}(X_{i})$  when $\varepsilon$ tends to 0, $ i=2,...,n-1$.

Secondly, we prove the second relation for $i=1$ and $i=n.$

There exist open sets $O_{1},O_{n}$, closed sets $F_{1},F_{n}$, with $O_{1}\supseteq X_{1}\supseteq F_{1},O_{n}\supseteq X_{n}\supseteq F_{n}$, such that $ \mu_{z}(O_{1}\setminus F_{1})<\epsilon, \mu_{z}(O_{n}\setminus F_{n})<\epsilon, \mu_{w}(O_{1}\setminus F_{1})<\epsilon, \mu_{w}(O_{n}\setminus F_{n})<\epsilon.$ We have $F_{1}\cap F_{n}=\varnothing$ since $X_{1}\cap X_{n}=\varnothing$, then there exist open sets $ Q_{1}\supseteq F_{1}, Q_{n}\supseteq F_{n}$, such that $Q_{1}\cap Q_{n}=\varnothing$. Let us set $A_{1}=O_{1}\cap Q_{1},A_{n}=O_{n}\cap Q_{n}$, then $A_{1}\supseteq F_{1},  A_{n}\supseteq F_{n}$. By Urysohn's Lemma, there is a continuous function $f_{1}:X\rightarrow[0,1]$ such that $f_{1}(x)={\alpha}$ if $x\in F_{1}$, $f_{1}(x)={0}$ if $x\in A_{1}^{c}$; and a continuous function $f_{n}:X\rightarrow[0,1]$ such that $f_{n}(x)={1}$ if $x\in F_{n}$, $f_{n}(x)={0}$ if $x\in A_{n}^{c}$.

Let $f=f_{1}+f_{n}$, then $f$ is a continuous function from $X$ to $ [0,1]$.
We have $f(x)=f_{1}(x)+f_{n}(x)=\alpha+0=\alpha$ if $x\in F_{1}$, $f(x)=f_{1}(x)+f_{n}(x)=0+1=1$ if $x\in F_{n}$, $f(x)=f_{1}(x)+f_{n}(x)=0$ if $x\in A_{1}^{c}\cap A_{n}^{c}$.

Suppose $p\in F_{1}, q\in F_{n}$, if not, we add $p$ into $ F_{1}$, $q$ into $F_{n}$, then $f\in C(X)_{(p,q,\alpha)}$, and $\phi(f)\in C(Y)_{(z,w,\beta)}$, while
\[\phi(f)(z)=\int_{X}fd\mu_{z}=\alpha\mu_{z}(F_{1})+\mu_{z}(F_{n})+\int_{(A_{1}\setminus F_{1})}fd\mu_{z}+\int_{(A_{n}\setminus F_{n})}fd\mu_{z},\]

\[\phi(f)(w)=\int_{X}fd\mu_{w}=\alpha\mu_{w}(F_{1})+\mu_{w}(F_{n})+\int_{(A_{1}\setminus F_{1})}fd\mu_{w}+\int_{(A_{n}\setminus F_{n})}fd\mu_{w},\]
so \[\beta=\frac{\phi(f)(z)}{\phi(f)(w)}=\frac{\alpha\mu_{z}(F_{1})+\mu_{z}(F_{n})+\int_{(A_{1}\setminus F_{1})}fd\mu_{z}+\int_{(A_{n}\setminus F_{n})}fd\mu_{z}}{\alpha\mu_{w}(F_{1})+\mu_{w}(F_{n})+\int_{(A_{1}\setminus F_{1})}fd\mu_{w}+\int_{(A_{n}\setminus F_{n})}fd\mu_{w}}.\]
Then
$$\begin{aligned}
 &|\alpha\mu_{z}(X_{1})+\mu_{z}(X_{n})-\beta[\alpha\mu_{w}(X_{1})+\mu_{w}(X_{n})]|\\
 =&|\alpha[\mu_{z}(X_{1})-\mu_{z}(F_{1})]+[\mu_{z}(X_{n})-\mu_{z}(F_{n})]\\
 -&[\int_{(A_{1}\setminus F_{1})}fd\mu_{z}+\int_{(A_{n}\setminus F_{n})}fd\mu_{z}]\\
 -&\beta\{\alpha[\mu_{w}(X_{1})-\mu_{w}(F_{1})]+[\mu_{w}(X_{n})-\mu_{w}(F_{n})]\\
 -&[\int_{(A_{1}\setminus F_{1})}fd\mu_{w}+\int_{(A_{n}\setminus F_{n})}fd\mu_{w}]\}|\\
 \leq&8\epsilon\\
\end{aligned}$$
Thus $\alpha\mu_{z}(X_{1})+\mu_{z}(X_{n})=\beta(\alpha\mu_{w}(X_{1})+\mu_{w}(X_{n}))$ when $\epsilon$ tends to 0.
\end{proof}

\begin{cor}\label{lem:gmea4}
Let there be given compact metrizable spaces $X$ and $Y$. Given any unital positive linear map $\phi$ from $C(X)$ to $C(Y)$ which sends $C(X)_{(p,q,\alpha)}$ to $C(Y)_{(z,w,\beta)}$, denote by $\mu_{z}$, $\mu_{w}$ the measures induced by evaluations of $\phi$ at $z$ and $w$. Let there be given a partition $\{X_{1},X_{2},...,X_{n}\}$ of $X$, where $X_{i}$ is a connected Borel set and $p\in X_{1},q\in X_{n}$.
Then we have the following distribution of $\mu_z$ and $\mu_w$ with respect to the partition:
$$\begin{aligned}
&\mu_{z}(X_{1})=\beta\mu_{w}(X_{1})+\frac{1-\beta}{1-\alpha},\\
&\mu_{z}(X_{n})=\beta\mu_{w}(X_{n})-\frac{1-\beta}{1-\alpha}\alpha.\\
\end{aligned}$$
In particularly, if $\alpha=\beta$, then $\phi(f)(z)=f(p), \phi(f)(w)=f(q)$ for all $f\in C(X)$.
\end{cor}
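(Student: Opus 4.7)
The plan is to parallel the derivation of Corollary \ref{lem:mea} from Lemma \ref{mea2}, using Lemma \ref{mea4} as the input. First I sum the identity $\mu_z(X_i)=\beta\mu_w(X_i)$ over $i=2,\ldots,n-1$ and use $\sum_{i=1}^n\mu_z(X_i)=\sum_{i=1}^n\mu_w(X_i)=1$ (which holds because $\phi$ is unital, so $\mu_z$ and $\mu_w$ are probability measures) to obtain
\[
1-\mu_z(X_1)-\mu_z(X_n)\;=\;\beta\bigl[1-\mu_w(X_1)-\mu_w(X_n)\bigr].
\]
Next I subtract the second identity of Lemma \ref{mea4}, $\alpha\mu_z(X_1)+\mu_z(X_n)=\beta[\alpha\mu_w(X_1)+\mu_w(X_n)]$, from the line above; this eliminates $\mu_z(X_n)$ and $\mu_w(X_n)$ on the right and leaves
\[
(1-\alpha)\mu_z(X_1)\;=\;(1-\beta)+\beta(1-\alpha)\mu_w(X_1),
\]
which rearranges to the first displayed formula of the corollary. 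Substituting this expression for $\mu_z(X_1)$ back into the second identity of Lemma \ref{mea4} and simplifying yields the second displayed formula. This is exactly the bookkeeping done in Corollary \ref{lem:mea}, with $b/(b+k)$ replaced by $\beta$ and $a/(a+k)$ replaced by $\alpha$.

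For the ``in particular'' case $\alpha=\beta$, the two displayed formulas collapse to $\mu_z(X_1)=\beta\mu_w(X_1)+1$ and $\mu_z(X_n)=\beta\mu_w(X_n)-\beta$. Since $0<\beta<1$ and both measures take values in $[0,1]$, the first equation forces $\mu_z(X_1)=1$ and $\mu_w(X_1)=0$; the second then forces $\mu_w(X_n)=1$ and $\mu_z(X_n)=0$. Applying this conclusion to partitions in which $X_1$ shrinks down to $\{p\}$ (e.g.\ by taking $X_1=\{p\}$ as a singleton, which is a connected Borel set) and $X_n$ shrinks down to $\{q\}$, I obtain $\mu_z(\{p\})=1$ and $\mu_w(\{q\})=1$, that is, $\mu_z=\delta_p$ and $\mu_w=\delta_q$. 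Equivalently, $\phi(f)(z)=f(p)$ and $\phi(f)(w)=f(q)$ for every $f\in C(X)$.

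The argument is essentially linear algebra once Lemma \ref{mea4} is in hand, so no real obstacle arises; the only point needing a moment's care is the final ``shrink the partition'' step, which is immediate by taking $\{p\}$ and $\{q\}$ themselves as the relevant parts (any leftover partition of $X\setminus\{p,q\}$ into connected Borel pieces will do, and is unproblematic in the path-connected setting where this corollary is actually applied in Section~4).
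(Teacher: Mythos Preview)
Your argument is correct and follows essentially the same route as the paper: the two displayed formulas are obtained by the same linear algebra as in Corollary~\ref{lem:mea} (summing the relations of Lemma~\ref{mea4} and using that $\mu_z,\mu_w$ are probability measures), and the ``in particular'' clause is deduced from those formulas by the obvious inequality argument and a shrinking-partition step, which is exactly what the paper intends (cf.\ the analogous clause in Corollary~\ref{lem:gmea1} and its use in Theorem~\ref{same}). One small slip: where you write ``subtract the second identity of Lemma~\ref{mea4}'' you mean \emph{add}; subtraction would not cancel the $\mu_z(X_n)$ and $\mu_w(X_n)$ terms, whereas addition does, and indeed your next displayed line is what results from adding. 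Your remark that the connectedness hypothesis on the $X_i$ is not actually used in the proof of Lemma~\ref{mea4} (so one may simply take $X_1=\{p\}$, $X_n=\{q\}$, $X_2=X\setminus\{p,q\}$) is a valid way to sidestep the partition issue in full generality.
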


\textbf{Finally Theorem \ref{thm3} and Theorem \ref{thm4} are confirmed:}
\begin{proof}
Based on Lemma \ref{mea4} and Corollary \ref{lem:gmea4}, similar proofs as in Theorem \ref{thm1}, \ref{thm2} work.
\end{proof}

\proof[Acknowledgements] The research of the first author was supported by a
grant from the Natural Sciences and Engineering Research
Council of Canada. He is indebted to The Fields Institute
for their generous support of the field of operator algebras. The first author is indebted to Cristian Ivanescu
for substantial discussions. The second author is supported by the National Science Foundation of China with grant No. 11501060; he is also supported by the Fundamental Research Funds for the Central Universities (Project No. 2018CDXYST0024 in Chongqing University).


\begin{thebibliography}{}



\bibitem{G} G. Elliott, \emph{The classification problem for amenable C*-algebras,} Proc. Internat. Congr. Math. (Zurich, 1994), Birkhauser, Basel, 1995, 922-932.


\bibitem{JS1} X. Jiang and H. Su, \emph{A Classification of simple limits of splitting interval algebras,} J. Funct. Anal. 151 (1997), 50-76.

\bibitem{JS2} X. Jiang and H. Su, \emph{On a simple unital projectionless C*-algebra, } Amer. J. Math. 121 (1999), no. 2, 359-413.

\bibitem{Li} L. Li, \emph{Simple inductive limit C*-algebras: spectra and approximations by interval algebras,} J. Reine Angew. Math. 507 (1991), 57-79.

\bibitem{Rak} S. Razak, \emph{On the classification of simple stably projectionless C*-algebras,} Canad. J. Math. 54(1) (2002), 138-224.

\bibitem{Est} E. Stormer, \emph{Positive linear maps of operator algebras,} Acta Math. 110 (1963), 233-278.

\bibitem{Th} K. Thomsen, \emph{Inductive limits of interval algebras: the tracial state space,} Amer. J. Math. 116 (1994), no. 3, 605-620.

\end{thebibliography}
\end{document}